\numberwithin{equation}{section}
\numberwithin{figure}{section}
\theoremstyle{plain}
\newtheorem{conjecture}{\protect\conjecturename}[section]
\theoremstyle{plain}
\newtheorem{thm}{\protect\theoremname}[section]
\theoremstyle{plain}
\newtheorem{lem}{\protect\lemmaname}[section]
\providecommand{\lemmaname}{Lemma}
\providecommand{\theoremname}{Theorem}
\providecommand{\conjecturename}{Conjecture}
\providecommand{\conjecturename}{Conjecture}
\providecommand{\lemmaname}{Lemma}
\providecommand{\theoremname}{Theorem}
\begin{document}
\title[On nonnegativity of $M_{C1}(m,n)$ and $M_{C5}(m,n)$]{Proof of a conjecture of Garvan and Jennings-Shaffer on the nonnegativity
of $M_{C1}(m,n)$ and $M_{C5}(m,n)$ }
\author{Bing He}
\address{School of Mathematics and Statistics, HNP-LAMA, Central South University
\\
 Changsha 410083, Hunan, People's Republic of China}
\email{yuhelingyun@foxmail.com; yuhe123456@foxmail.com}
\author{Shuming Liu}
\address{School of Mathematics and Statistics, HNP-LAMA, Central South University
\\
 Changsha 410083, Hunan, People's Republic of China}
\email{232111040@csu.edu.cn}
\keywords{nonnegativity; $M_{C1}(m,n);$ $M_{C5}(m,n);$ lattice point}
\subjclass[2000]{11P82; 11P21; 11P81}
\thanks{The first author is the corresponding author.}
\begin{abstract}
In their 2016 paper on exotic Bailey--Slater SPT-functions, Garvan
and Jennings-Shaffer introduced many new spt-crank-type functions
and proposed a conjecture that the spt-crank-type functions $M_{C1}(m,n)$
and $M_{C5}(m,n)$ are both nonnegative for all $m\in\mathbb{Z}$
and $n\in\mathbb{N}.$ Applying Wright\textquoteright s circle method,
Jang and Kim showed that $M_{C1}(m,n)$ and $M_{C5}(m,n)$ are both
positive for a fixed integer $m$ and large enough integers $n.$
Up to now, no complete proof of this conjecture has been given. In
this paper, we provide a complete proof for this conjecture by using
the theory of lattice points. Our proof is quite different from that
of Jang and Kim. 
\end{abstract}

\maketitle

\section{Introduction}

A partition of a positive integer $n$ is a non-increasing sequence
of positive integers whose sum is $n.$ Let $p(n)$ be the number
of integer partitions of a positive integer $n$ and let $p(0):=1.$
Then the generating function for $p(n)$ is 
\[
\sum_{n\geq0}p(n)q^{n}=\frac{1}{(q;q)_{\infty}},
\]
where $(z;q)_{\infty}$ is the $q$-shifted factorial given by \cite{GR}
\[
(z;q)_{\infty}:=\prod_{j\geq0}(1-zq^{j}),\;|q|<1.
\]
The famous Ramanujan partition congruences state that 
\begin{align}
p(5n+4) & \equiv0(\bmod\:5),\label{eq:2-1}\\
p(7n+5) & \equiv0(\bmod\:7),\label{eq:2-2}\\
p(11n+6) & \equiv0(\bmod\:11).\label{eq:2-3}
\end{align}
To explain Ramanujan\textquoteright s partition congruences \eqref{eq:2-1},
\eqref{eq:2-2} and \eqref{eq:2-3}, Dyson \cite{D} introduced the
rank of a partition as the largest part minus the number of parts.
Furthermore, Dyson conjectured that the partitions of $5n+4$ classified
by their rank modulo 5 will produce five sets with equal cardinality
$p(5n+4)/5.$ He also conjectured that the partitions of $7n+5,$
classified by rank, will split into seven sets of equal cardinality
$p(7n+5)/7.$ Let $N(m,n)$ denote the number of partitions of $n$
whose rank is $m.$ Then a generating function for $N(m,n)$ is as
follows \cite{D}:
\[
R(z,q):=\sum_{n\geq0}\sum_{m=-\infty}^{\infty}N(m,n)z^{m}q^{n}=\sum_{n\geq0}\frac{q^{n^{2}}}{(zq;q)_{n}(z^{-1}q;q)_{n}},
\]
where
\[
(\zeta;q)_{0}:=1,\quad(\zeta;q)_{n}:=\prod_{k=1}^{n}(1-\zeta q^{k-1}),\;n\geq1.
\]
If $z=1,$ then the above formula reduces to Euler's celebrated generating
function for $p(n):$
\[
\sum_{n=0}^{\infty}p(n)q^{n}=\sum_{n=0}^{\infty}\frac{q^{n^{2}}}{(q;q)_{n}^{2}}.
\]

Let $N(r,m,n)$ denote the number of partitions of $n$ whose rank
is congruent to $r$ modulo $m.$ Later, these conjectures were proved
by A.O.L. Atkin and H.P.F. Swinnerton-Dyer \cite{AS} using the generating
functions of the rank difference $N(s,l,ln+d)-N(t,l,ln+d)$ with $l=5,7$
and $0\leq d,s,t<l.$ They proved that
\[
N(r,5,5n+4)=\frac{p(5n+4)}{5},\quad0\leq r\leq4,
\]
and
\[
N(r,7,7n+5)=\frac{p(7n+5)}{7},\quad0\leq r\leq6.
\]
However, the rank can not provide a purely combinatorial description
of the Ramanujan congruence \eqref{eq:2-3}. In his paper \cite{D},
Dyson also conjectured the existence of another partition statistics,
the cranks of partitions, which gives a combinatorial interpretation
of Ramanujan\textquoteright s congruence for the partition function
modulo 11. This partition statistics was later discovered by Andrews
and Garvan in their paper \cite{AG}. For $n>1,$ let $M(m,n)$ denote
the number of partitions of $n$ whose crank is $m$ while for $n\leq1$
we set 
\[
M(m,n)=\begin{cases}
-1, & \mathrm{if}\;(m,n)=(0,1),\\
1, & \mathrm{if}\;(m,n)=(0,0),(1,1),(-1,1),\\
0, & \mathrm{otherwise}.
\end{cases}
\]
The generating function for $M(m,n)$ is as follows: 
\[
C(z,q):=\sum_{m=-\infty}^{\infty}\sum_{n=0}^{\infty}M(m,n)z^{m}q^{n}=\frac{(q;q)_{\infty}}{(zq;q)_{\infty}(z^{-1}q;q)_{\infty}}.
\]
  Let $M(m,L;n)$ be the number of partitions of $n$ with crank
congruent to $m$ modulo $L.$ In \cite{AG}, Andrews and Garvan proved
that 
\[
M(m,11;11n+6)=\frac{p(11n+6)}{11},\quad0\leq m\leq10.
\]
A systematic account of ranks and cranks can be found in Chapters
2, 3, and 4 of Ramanujan\textquoteright s Lost Notebook \cite{AB}.

Let $\mathrm{spt}(n)$ denote the total number of appearances of the
smallest part in each integer partition of $n.$ A generating function
for $\mathrm{spt}(n)$ is as follows:
\[
S(q):=\sum_{n=1}^{\infty}\mathrm{spt}(n)q^{n}=\sum_{n=1}^{\infty}\frac{q^{n}}{\left(1-q^{n}\right)^{2}\left(q^{n+1};q\right)_{\infty}}.
\]
In \cite{A}, Andrews introduced the spt function $\mathrm{spt}(n)$
and proved three striking congruences:
\begin{align}
\mathrm{spt}(5n+4) & \equiv0(\bmod\:5),\label{eq:2-4}\\
\mathrm{spt}(7n+5) & \equiv0(\bmod\:7),\label{eq:2-5}\\
\mathrm{spt}(13n+6) & \equiv0(\bmod\:13).\nonumber 
\end{align}
These congruences are reminiscent of Ramanujan\textquoteright s partition
congruences \eqref{eq:2-1}, \eqref{eq:2-2} and \eqref{eq:2-3}.
In \cite{AGL}, Andrews, Garvan, and Liang introduced an spt-crank
$N_{S}(m,t,n)$ and proved that 
\[
N_{S}(k,5,5n+4)=\frac{\mathrm{spt}(5n+4)}{5}\text{ for }0\leq k\leq4,
\]
and 
\[
N_{S}(k,7,7n+5)=\frac{\mathrm{spt}(7n+5)}{7}\text{ for }0\leq k\leq6,
\]
which provide new combinatorial interpretations of the congruences
\eqref{eq:2-4} and \eqref{eq:2-5} of the spt function. They defined
a two variable generalization of the spt function: 
\[
S(z,q):=\sum_{n=1}^{\infty}\sum_{m=-\infty}^{\infty}N_{S}(m,n)z^{m}q^{n}=\sum_{n=1}^{\infty}\frac{q^{n}\left(q^{n+1};q\right)_{\infty}}{\left(zq^{n};q\right)_{\infty}\left(z^{-1}q^{n};q\right)_{\infty}}.
\]
It is obvious that $S(q)=S(1,q).$ Introducing an extra variable,
they obtained a new statistic and refinement of the original counting
function. In the same paper, by examining $S(\zeta_{5},q)$ and $S(\zeta_{7},q)$
with $\zeta_{5}=\mathrm{e}^{2\pi\mathrm{i}/5}$ and $\zeta_{7}=\mathrm{e}^{2\pi\mathrm{i}/7},$
Andrews, Garvan, and Liang reproved the congruences \eqref{eq:2-4}
and \eqref{eq:2-5}. Applying Bailey\textquoteright s lemma to a Bailey
pair, they deduced an interesting formula between $R(z,q),C(z,q)$
and $S(z,q):$
\begin{equation}
(1-z)\left(1-z^{-1}\right)S(z,q)=R(z,q)-C(z,q).\label{eq:2-6}
\end{equation}

An overpartition is a partition in which the first occurrence of a
part may be overlined. In 2014, Garvan and Jennings-Shaffer \cite{GJ2}
applied Bailey\textquoteright s Lemma to four different Bailey pairs
to obtain several new spt-crank functions and proved many congruences
on three spt functions for overpartitions and the spt function for
partitions with smallest part even and without repeated odd parts.
Let $\overline{\mathrm{spt}}(n)$ denote the total number of appearances
of the smallest parts among the overpartitions of n whose smallest
part is not overlined and let $\mathrm{M2spt}(n)$ denote the number
of smallest parts in the partitions of $n$ without repeated odd parts
and smallest part even. The generating functions for $\overline{\mathrm{spt}}(n)$
and $\mathrm{M2spt}(n)$ are as follows:
\begin{align*}
\sum_{n=1}^{\infty}\overline{\mathrm{spt}}(n)q^{n} & =\sum_{n=1}^{\infty}\frac{q^{n}\left(-q^{n+1};q\right)_{\infty}}{\left(1-q^{n}\right)^{2}\left(q^{n+1};q\right)_{\infty}},\\
\sum_{n=1}^{\infty}\mathrm{M2spt}(n)q^{n} & =\sum_{n=1}^{\infty}\frac{q^{2n}\left(-q^{2n+1};q^{2}\right)_{\infty}}{\left(1-q^{2n}\right)^{2}\left(q^{2n+2};q^{2}\right)_{\infty}}.
\end{align*}
In the same paper, they defined two variable generalizations for $\overline{\mathrm{spt}}(n)$
and $\mathrm{M2spt}(n)$ by
\begin{align*}
\overline{\mathrm{S}}(z,q) & :=\sum_{n=1}^{\infty}\frac{q^{n}\left(-q^{n+1},q^{n+1};q\right)_{\infty}}{\left(zq^{n};q\right)_{\infty}\left(z^{-1}q^{n};q\right)_{\infty}},\\
\mathrm{S}2(z,q) & :=\sum_{n=1}^{\infty}\frac{q^{2n}\left(-q^{2n+1},q^{2n+2};q^{2}\right)_{\infty}}{\left(zq^{2n};q^{2}\right)_{\infty}\left(z^{-1}q^{2n};q^{2}\right)_{\infty}}.
\end{align*}
Let $\overline{R}(z,q)$ denote the generating function for the Dyson
rank\footnote{The Dyson rank of an overpartition is the largest part minus the number
of parts.} of an overpartition and let $R2(z,q)$ denote the $M_{2}$-rank\footnote{The $M_{2}$-rank of a partition without repeated odd parts is the
ceiling of half the largest part minus the number of parts.} of a partition without repeated odd parts. Applying Bailey\textquoteright s
lemma to two different Bailey pair, they \cite{GJ2} found two beautiful
identities:
\begin{align*}
(1-z)\left(1-z^{-1}\right)\overline{\mathrm{S}}(z,q) & =\overline{R}(z,q)-(-q;q)_{\infty}C(z,q),\\
(1-z)\left(1-z^{-1}\right)\mathrm{S}2(z,q) & =R2(z,q)-\left(-q;q^{2}\right)_{\infty}C\left(z,q^{2}\right).
\end{align*}
These two formulas are very similar to \eqref{eq:2-6}.

Motivated by Andrews--Garvan--Liang \cite{AGL} and Garvan--Jennings-Shaffer
\cite{GJ2}, Garvan and Jennings-Shaffer \cite{GJ} studied a family
of spt-crank-type functions with the form: 
\[
\frac{P(q)}{\left(z;q\right)_{\infty}\left(z^{-1};q\right)_{\infty}}\sum_{n=1}^{\infty}\left(z,z^{-1};q\right)_{n}q^{n}\beta_{n},
\]
where $P(q)$ is an infinite product and $\beta_{n}$ orginates from
a Bailey pair relative to $(1,q)$ and considered the Bailey pairs
from group A in \cite{S1} as well as the Bailey pairs $C(1),C(5),E(2)$
and $E(4).$ Given a Bailey pair $(\alpha^{X},\beta^{X}),$ they defined
the corresponding two variable spt-crank-type function by 
\[
S_{X}(z,q):=\frac{P_{X}(q)}{\left(z;q\right)_{\infty}\left(z^{-1};q\right)_{\infty}}\sum_{n=1}^{\infty}\left(z,z^{-1};q\right)_{n}\beta_{n}^{X}q^{n}=:\sum_{n=1}^{\infty}\sum_{m=-\infty}^{\infty}M_{X}(m,n)z^{m}q^{n}.
\]
In particular, 
\begin{align*}
\sum_{n=1}^{\infty}\sum_{m=-\infty}^{\infty}M_{A1}(m,n)z^{m}q^{n} & =\frac{(q;q)_{\infty}}{\left(z;q\right)_{\infty}\left(z^{-1};q\right)_{\infty}}\sum_{n=1}^{\infty}\frac{q^{n}\left(z;q\right)_{n}\left(z^{-1};q\right)_{n}}{(q;q)_{2n}}\\
 & =\sum_{n=1}^{\infty}\frac{q^{n}\left(q^{2n+1};q\right)_{\infty}}{\left(zq^{n},z^{-1}q^{n};q\right)_{\infty}},
\end{align*}
\begin{align*}
\sum_{n=1}^{\infty}\sum_{m=-\infty}^{\infty}M_{A3}(m,n)z^{m}q^{n} & =\frac{(q;q)_{\infty}}{\left(z;q\right)_{\infty}\left(z^{-1};q\right)_{\infty}}\sum_{n=1}^{\infty}\frac{q^{2n}\left(z;q\right)_{n}\left(z^{-1};q\right)_{n}}{(q;q)_{2n}}\\
 & =\sum_{n=1}^{\infty}\frac{q^{2n}\left(q^{2n+1};q\right)_{\infty}}{\left(zq^{n},z^{-1}q^{n};q\right)_{\infty}},
\end{align*}
\begin{align*}
\sum_{n=1}^{\infty}\sum_{m=-\infty}^{\infty}M_{A5}(m,n)z^{m}q^{n} & =\frac{(q;q)_{\infty}}{\left(z;q\right)_{\infty}\left(z^{-1};q\right)_{\infty}}\sum_{n=1}^{\infty}\frac{q^{n^{2}+n}\left(z;q\right)_{n}\left(z^{-1};q\right)_{n}}{(q;q)_{2n}}\\
 & =\sum_{n=1}^{\infty}\frac{q^{n^{2}+n}\left(q^{2n+1};q\right)_{\infty}}{\left(zq^{n},z^{-1}q^{n};q\right)_{\infty}},
\end{align*}
\begin{align*}
\sum_{n=1}^{\infty}\sum_{m=-\infty}^{\infty}M_{A7}(m,n)z^{m}q^{n} & =\frac{(q;q)_{\infty}}{\left(z;q\right)_{\infty}\left(z^{-1};q\right)_{\infty}}\sum_{n=1}^{\infty}\frac{q^{n^{2}}\left(z;q\right)_{n}\left(z^{-1};q\right)_{n}}{(q;q)_{2n}}\\
 & =\sum_{n=1}^{\infty}\frac{q^{n^{2}}\left(q^{2n+1};q\right)_{\infty}}{\left(zq^{n},z^{-1}q^{n};q\right)_{\infty}},
\end{align*}
\begin{align*}
\sum_{n=1}^{\infty}\sum_{m=-\infty}^{\infty}M_{C1}(m,n)z^{m}q^{n} & =\frac{\left(q;q^{2}\right)_{\infty}(q;q)_{\infty}}{\left(z;q\right)_{\infty}\left(z^{-1};q\right)_{\infty}}\sum_{n=1}^{\infty}\frac{q^{n}\left(z;q\right)_{n}\left(z^{-1};q\right)_{n}}{\left(q;q^{2}\right)_{n}(q;q)_{n}}\\
 & =\sum_{n=1}^{\infty}\frac{q^{n}\left(q^{2n+1};q^{2}\right)_{\infty}\left(q^{n+1};q\right)_{\infty}}{\left(zq^{n},z^{-1}q^{n};q\right)_{\infty}},
\end{align*}
\begin{align*}
\sum_{n=1}^{\infty}\sum_{m=-\infty}^{\infty}M_{C5}(m,n)z^{m}q^{n} & =\frac{\left(q;q^{2}\right)_{\infty}(q;q)_{\infty}}{\left(z;q\right)_{\infty}\left(z^{-1};q\right)_{\infty}}\sum_{n=1}^{\infty}\frac{q^{\frac{n^{2}+n}{2}}\left(z;q\right)_{n}\left(z^{-1};q\right)_{n}}{\left(q;q^{2}\right)_{n}(q;q)_{n}}\\
 & =\sum_{n=1}^{\infty}\frac{q^{\frac{n^{2}+n}{2}}\left(q^{2n+1};q^{2}\right)_{\infty}\left(q^{n+1};q\right)_{\infty}}{\left(zq^{n},z^{-1}q^{n};q\right)_{\infty}},
\end{align*}
\begin{align*}
\sum_{n=1}^{\infty}\sum_{m=-\infty}^{\infty}M_{E2}(m,n)z^{m}q^{n} & =\frac{\left(q^{2};q^{2}\right)_{\infty}}{\left(z;q\right)_{\infty}\left(z^{-1};q\right)_{\infty}}\sum_{n=1}^{\infty}\frac{(-1)^{n}q^{n}\left(z;q\right)_{n}\left(z^{-1};q\right)_{n}}{\left(q^{2};q^{2}\right)_{n}}\\
 & =\sum_{n=1}^{\infty}\frac{(-1)^{n}q^{n}\left(q^{2n+2};q^{2}\right)_{\infty}}{\left(zq^{n},z^{-1}q^{n};q\right)_{\infty}},
\end{align*}
and
\begin{align*}
\sum_{n=1}^{\infty}\sum_{m=-\infty}^{\infty}M_{E4}(m,n)z^{m}q^{n} & =\frac{\left(q^{2};q^{2}\right)_{\infty}}{\left(z;q\right)_{\infty}\left(z^{-1};q\right)_{\infty}}\sum_{n=1}^{\infty}\frac{q^{2n}\left(z;q\right)_{n}\left(z^{-1};q\right)_{n}}{\left(q^{2};q^{2}\right)_{n}}\\
 & =\sum_{n=1}^{\infty}\frac{q^{2n}\left(q^{2n+2};q^{2}\right)_{\infty}}{\left(zq^{n},z^{-1}q^{n};q\right)_{\infty}}.
\end{align*}
They proved various congruences for these functions by using dissection
formulas for $S_{X}(\zeta,q)$ and found single series and product
identities for $S_{A5}(z,q),S_{A7}(z,q),$ $S_{C5}(z,q)$ and $S_{E2}(z,q)$
as well as Hecke-Rogers type double sum formulas for $S_{A1}(z,q),$
$S_{A3}(z,q),S_{C1}(z,q)$ and $S_{E4}(z,q).$ At the end of their
paper, they proposed the following conjecture suggested by numerical
calculations.
\begin{conjecture}
\label{conj1.1} For $m\in\mathbb{Z}$ and $n\in\mathbb{N},$ $M_{C1}(m,n)$
and $M_{C5}(m,n)$ are both nonnegative\footnote{In this paper, we let $\mathbb{Z}$ and $\mathbb{N}$ denote the set
of integers and the set of positive integers respectively.}.
\end{conjecture}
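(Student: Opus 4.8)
The plan is to expand $S_{C1}(z,q)$ into an explicit signed sum over quadruples of partitions, to recognise that sum as a signed count of lattice points, and then to evaluate the count by a sign-reversing, weight-preserving involution; $S_{C5}(z,q)$ is handled in parallel.

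First, since $(zq^{k};q)_{\infty}(z^{-1}q^{k};q)_{\infty}$ is invariant under $z\leftrightarrow z^{-1}$ and the numerators in
\[
S_{C1}(z,q)=\sum_{k\geq 1}\frac{q^{k}\,(q^{2k+1};q^{2})_{\infty}\,(q^{k+1};q)_{\infty}}{(zq^{k};q)_{\infty}(z^{-1}q^{k};q)_{\infty}}
\]
are free of $z$, we have $M_{C1}(m,n)=M_{C1}(-m,n)$, so it suffices to treat $m\geq 0$. After pulling out the leading $q^{k}$, expand the four remaining factors of the $k$-th summand by Euler's theorems: $1/(zq^{k};q)_{\infty}$ and $1/(z^{-1}q^{k};q)_{\infty}$ are the generating functions for ordinary partitions $\pi_{2},\pi_{3}$ with all parts $\geq k$, with $z^{\ell(\pi_{2})}$ and $z^{-\ell(\pi_{3})}$ marking the number of parts, while $(q^{k+1};q)_{\infty}$ and $(q^{2k+1};q^{2})_{\infty}$ generate, weighted by the signs $(-1)^{\ell(\pi_{1})}$ and $(-1)^{\ell(\pi_{0})}$, partitions $\pi_{1}$ into distinct parts $\geq k+1$ and partitions $\pi_{0}$ into distinct odd parts $\geq 2k+1$. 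Reading off the coefficient of $z^{m}q^{n}$ yields
\[
M_{C1}(m,n)=\sum_{k\geq 1}\ \sum_{\substack{k+|\pi_{0}|+|\pi_{1}|+|\pi_{2}|+|\pi_{3}|=n\\ \ell(\pi_{2})-\ell(\pi_{3})=m}}(-1)^{\ell(\pi_{0})+\ell(\pi_{1})},
\]
the inner sum running over all quadruples of the stated shapes. For fixed $k$ these quadruples, encoded by the vector of their part-multiplicities, are exactly the integer points of a bounded polytope $P_{k,m,n}$ cut out by the two displayed linear relations, the nonnegativity constraints, and the ``distinct-part'' constraints (each multiplicity in $\pi_{0}$ or $\pi_{1}$ being at most $1$); writing $e_{k}$ and $o_{k}$ for the number of these points with $\ell(\pi_{0})+\ell(\pi_{1})$ even, respectively odd, we have $M_{C1}(m,n)=\sum_{k}(e_{k}-o_{k})$.

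Two observations locate the difficulty. Were the factor $(q^{2k+1};q^{2})_{\infty}$ absent, $S_{C1}(z,q)$ would collapse to the ordinary spt-crank generating function $S(z,q)$ of \cite{AGL}, whose coefficients are known to be nonnegative, so the extra distinct-odd-parts component $\pi_{0}$ is the entire source of trouble; on the other hand, the Hecke--Rogers type double-sum formula for $S_{C1}(z,q)$ found by Garvan and Jennings-Shaffer \cite{GJ} already exhibits the series as a two-dimensional lattice sum, from which extracting $M_{C1}(m,n)$ reduces matters to a comparison of restricted partition counts --- again a lattice-point inequality. On either route the heart of the proof is to construct, on each $P_{k,m,n}$ (or, should that not suffice, on the disjoint union over $k$), a sign-reversing, relation-preserving involution $\iota$ whose set $F$ of fixed points consists only of points with $\pi_{0}=\pi_{1}=\emptyset$; then $M_{C1}(m,n)=\#F\geq 0$, and one in fact obtains an explicit nonnegative lattice-point formula for $M_{C1}(m,n)$. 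The natural candidates for $\iota$ transfer the largest part of $\pi_{1}$ --- or, when $\pi_{1}=\emptyset$, that of $\pi_{0}$ --- into $\pi_{2}$ or $\pi_{3}$, and conversely move a suitable top part of $\pi_{2}\sqcup\pi_{3}$ back, but $\iota$ must be engineered so as to simultaneously (i) preserve the total size $n$ and the difference $\ell(\pi_{2})-\ell(\pi_{3})=m$, (ii) respect the lower bounds $k$, $k+1$, $2k+1$ on the parts, and (iii) respect the parity constraint on the parts of $\pi_{0}$. I expect this triple bookkeeping --- essentially, controlling how the hyperplanes and halfspaces defining $P_{k,m,n}$ transform under $\iota$ --- to be the main obstacle, and it is here that the theory of lattice points does the real work.

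Finally, $M_{C5}(m,n)$ is handled by the identical scheme: the marked part $k$ now contributes $q^{(k^{2}+k)/2}=q^{\binom{k+1}{2}}$ in place of $q^{k}$, so the size relation becomes $\binom{k+1}{2}+|\pi_{0}|+|\pi_{1}|+|\pi_{2}|+|\pi_{3}|=n$. Since $\iota$ never changes $k$, this substitution affects only which polytopes $P_{k,m,n}$ occur and leaves both the combinatorial model for $(\pi_{0},\pi_{1},\pi_{2},\pi_{3})$ and the involution unchanged, so the same fixed-point analysis gives $M_{C5}(m,n)\geq 0$. I would carry out the $C1$ case in detail and then indicate the routine modifications for $C5$.
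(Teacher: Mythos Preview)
Your proposal is a framework, not a proof: the sign-reversing involution $\iota$ is never actually constructed, and you yourself flag the construction as ``the main obstacle.'' That gap is real. The ``natural candidate'' you describe---moving the largest part of $\pi_{1}$ (or $\pi_{0}$) into $\pi_{2}$ or $\pi_{3}$---already fails condition (i): transferring a single part to $\pi_{2}$ changes $\ell(\pi_{2})-\ell(\pi_{3})$ by $\pm 1$, so the orbit leaves the slice $m$. Any repair (move two parts at once, or split a part) immediately runs into (ii) and (iii), and there is no indication in your outline of how to resolve this. Invoking ``the theory of lattice points'' at this stage is a placeholder, not an argument; the polytope language adds nothing beyond the partition picture until a specific map is produced. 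Your treatment of $C5$ inherits the same gap, since it relies on the very involution that has not been built.

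The paper proceeds along an entirely different route and does not attempt a bijective proof. Starting from the Lambert-series generating functions of Jang and Kim, it reduces the problem to the nonnegativity of auxiliary coefficients $X^{(m)}(n)$ and $Y^{(m)}(n)$. For $M_{C5}$ the argument is short: writing $Y^{(m)}(n)$ as a signed sum over factorisations $d_{1}d_{2}=N$ with parity and size constraints, simple containments among four divisor sets $A_{1},A_{2},B_{1},B_{2}$ give $Y^{(m)}(n)\geq 0$ directly. For $M_{C1}$ the even-$n$ case is genuinely harder: $X^{(m)}(n)$ is expressed as a difference $M_{2}^{(m)}(n)-M_{1}^{(m)}(n)$ of lattice-point counts in two explicit planar regions, and Jarn\'{\i}k's theorem on lattice points inside a closed curve yields the lower bound $X^{(m)}(n)>\tfrac{\log 2}{4}(n+1)-6\sqrt{n+1}-m-2$, which is positive once $n\geq f(m)$. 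A separate elementary argument handles $n\leq 20m$, and the finitely many pairs with $0\leq m\leq 120$ and $20m<n<f(m)$ are checked by computer. So the paper's proof is analytic-plus-finite-verification rather than purely combinatorial, and in particular $C5$ turns out to be markedly easier than $C1$---the opposite of the symmetric picture your outline suggests.
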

In \cite{JK}, using Wright\textquoteright s circle method \cite{W},
Jang and Kim\footnote{In \cite{JK}, Jang and Kim write $M_{C1}(m,n)$ and $M_{C5}(m,n)$
as $N_{C1}(m,n)$ and $N_{C5}(m,n)$ respectively.} derived asymptotic formulas for $M_{C1}(m,n)$ and $M_{C5}(m,n)$
and showed that $M_{C1}(m,n)$ and $M_{C5}(m,n)$ are both positive
for a fixed integer $m$ and large enough integers $n.$

As far as we are concerned, no complete proof of Conjecture \ref{conj1.1}
has been given so far. In this paper, we shall supply a complete proof
of this conjecture by applying the theory of lattice points. Our method
is quite different from that of Jang and Kim \cite{JK}.

Let us now briefly outline the key steps to prove Conjecture \ref{conj1.1}.

The nonnegativity of $M_{C1}(m,n)$ is more complicated than that
of $M_{C5}(m,n).$ We first consider the nonnegativity of $M_{C1}(m,n).$

In order to study the nonnegativity of $M_{C1}(m,n),$ we need to
consider the nonnegativity of $X^{(m)}(n),$ where $X^{(m)}(n)$ is
defined by 
\begin{equation}
\sum_{n\geq0}X^{(m)}(n)q^{n}:=\frac{1}{1-q^{2}}\left(\sum_{n=1}^{\infty}\frac{(-1)^{n}q^{n(3n+1)+2mn}}{1-q^{2n}}-\sum_{n=1}^{\infty}\frac{(-1)^{n}q^{\frac{n(n+1)}{2}+mn}}{1-q^{n}}\right).\label{eq:1-13}
\end{equation}
When $n$ is even, we can relate the coefficient $X^{(m)}(n)$ to
the numbers of lattice points in some region of $\mathbb{R}^{2}.$
With the help of an important theorem \cite[Theorem 9.2, p.123]{H}
due to the Czech mathematician M.V. Jarnik, we can get an estimate
for the lower bound of $X^{(m)}(n).$ This, together with other results,
allows us to obtain the nonnegativity of $X^{(m)}(n).$

The first result is as follows.
\begin{thm}
\label{L2} Let $m$ and $n$ be two non-negative integers and let
\begin{equation}
f(m):=\left(\frac{2\left(6+\sqrt{36+(m+2)\log2}\right)}{\log2}\right)^{2}.\label{eq:t1-1}
\end{equation}
 If $n$ is odd, then $X^{(m)}(n)$ is nonnegative; if $n$ is even,
then $X^{(m)}(n)>0$ for $n\geq f(m).$
\end{thm}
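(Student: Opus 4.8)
The plan is to analyze the two pieces in \eqref{eq:1-13} separately and to treat the odd and even cases of $n$ by genuinely different arguments. Write
\[
\sum_{n\geq0}X^{(m)}(n)q^{n}=\frac{1}{1-q^{2}}\bigl(A_{m}(q)-B_{m}(q)\bigr),
\]
where $A_{m}(q)=\sum_{n\geq1}(-1)^{n}q^{n(3n+1)+2mn}/(1-q^{2n})$ and $B_{m}(q)=\sum_{n\geq1}(-1)^{n}q^{n(n+1)/2+mn}/(1-q^{n})$. Expanding each geometric series $1/(1-q^{2n})$ and $1/(1-q^{n})$ and also $1/(1-q^{2})$ turns $X^{(m)}(N)$ into a signed count of triples, i.e.\ a difference of two counting functions over explicit regions in $\mathbb{Z}^{2}$ (one region coming from $A_{m}$, one from $B_{m}$, each cut out by a quadratic inequality in one variable, a linear congruence/parity condition, and the $1/(1-q^{2})$ factor contributing an extra parity shift). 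First I would record these combinatorial descriptions precisely, being careful with the sign $(-1)^{n}$, which is what forces us to look at parity of the summation index and hence at the parity of $N$.

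For $n$ odd I expect that the contributions organize so that $X^{(m)}(n)$ is a sum of manifestly nonnegative terms — the odd/even split of the inner index interacts with the outer parity so that the negative contributions from $B_{m}$ are either absent or are dominated term-by-term by positive contributions from $A_{m}$ (or cancel against the $1/(1-q^{2})$ expansion). Concretely I would show that for odd $N$ the coefficient can be written as a genuine nonnegative count (e.g.\ of lattice points with no cancellation), so nonnegativity is immediate with no size hypothesis on $N$. This is the easier half and should be a finite bookkeeping argument once the lattice-point dictionary above is in place.

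For $n$ even the negative part $B_{m}(q)$ genuinely contributes, and here the strategy is quantitative: express $X^{(m)}(n)$ as $P_{m}(n)-Q_{m}(n)$, where $P_{m}(n)$ counts lattice points under a parabola of the form $x y \lesssim n$ (equivalently $x^{2}\lesssim n$ with an auxiliary linear variable) subject to congruence conditions, while $Q_{m}(n)$ is a comparatively small correction. The point of invoking Jarník's theorem \cite[Theorem 9.2, p.123]{H} is that the number of lattice points on (not under) a strictly convex arc of length $\ell$ is $O(\ell^{2/3})$; applied to the boundary of the region defining $P_{m}(n)$ this yields a lower bound $P_{m}(n)\ge (\text{main term})-O(n^{1/3})$, where the main term grows like a constant times $\sqrt{n}$ times a logarithmic factor. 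One then needs $Q_{m}(n)$ to be $O(\log n)$ or at worst $O(\sqrt n)$ with a strictly smaller constant. Matching these estimates and solving the resulting inequality for $n$ is exactly what produces the explicit threshold $f(m)$ in \eqref{eq:t1-1}: the shape $\bigl(2(6+\sqrt{36+(m+2)\log 2})/\log 2\bigr)^{2}$ is the positive root of a quadratic in $\sqrt{n}$, and the $\log 2$'s signal that the competing bounds are being compared after taking $\log_{2}$ of terms like $q^{n(3n+1)+2mn}$ at $q=\tfrac12$-type test values or, more likely, after bounding a dyadic sum $\sum 1/(2^{k}-1)$.

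The main obstacle will be the even case, specifically getting the error terms and the main term into a form where Jarník's estimate applies cleanly and where the correction $Q_{m}(n)$ is provably of smaller order with a fully explicit constant depending on $m$. Two subtleties I anticipate: (i) the region whose lattice points we count is bounded by a hyperbola/parabola only after a change of variables, so one must check the curvature hypothesis of Jarník's theorem (strict convexity, no long flat pieces, arc length under control) on the transformed curve; and (ii) the uniformity in $m$ — the shift $q^{2mn}$ translates the parabola, and one must verify that the lattice-point main term is not degraded (and that the threshold $f(m)$ as written actually suffices) uniformly for all $m\ge 0$. Once these are handled, the odd case plus the even-case threshold together give the statement of Theorem \ref{L2}.
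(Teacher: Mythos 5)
Your overall architecture (reduce $X^{(m)}(n)$ to a signed lattice-point count, handle odd $n$ by showing the negative part is harmless, handle even $n$ by a main-term-versus-error estimate whose threshold is the root of a quadratic in $\sqrt{n}$) matches the paper's, and your reading of the shape of $f(m)$ is right. But two of your key quantitative claims are wrong in ways that would prevent the even case from closing. First, you invoke the wrong Jarn\'{\i}k theorem: the paper uses the statement $|N-A|<l$ comparing the number $N$ of lattice points inside a rectifiable simple closed curve with the enclosed area $A$, with error the perimeter $l$; the $O(l^{2/3})$ bound for lattice points \emph{on} a strictly convex arc does not by itself give any lower bound for the count inside a region in terms of its area, so your step ``$P_{m}(n)\ge(\text{main term})-O(n^{1/3})$'' does not follow from what you cite. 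Second, your picture of the even case as ``large count minus small correction'' with a main term of order $\sqrt{n}\log n$ is structurally wrong: after the divisor rewriting, $X^{(m)}(n)=M_{2}^{(m)}(n)-M_{1}^{(m)}(n)$ is a difference of two lattice-point counts over hyperbolic regions $\Omega'$ and $\Omega$ \emph{each} of area $\Theta(n)$, and neither is negligible. The positivity comes from the identity that the two areas differ by exactly $\tfrac12(n+1)\log2$ (the $m$- and $t$-dependent parts of the two area formulas cancel because the product of the two logarithmic ratios equals $2$), which is linear in $n$ and dominates the $O(\sqrt{n+1})+O(m)$ perimeter errors; this identity is also what settles the uniformity in $m$ you flag as a worry. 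The $\log2$ has nothing to do with evaluating $q$-series at $q=\tfrac12$ or dyadic sums.

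Two smaller gaps. The regions only see divisor pairs with $d_{1}$ odd, so you need an extra lemma (the paper's Lemma \ref{lemma2.3}) saying that the number of lattice points with odd $y$-coordinate is $N/2$ up to an error bounded by the horizontal extent of the region; your ``congruence conditions'' remark does not supply this. And in the odd case you have the roles of the two series reversed: for odd $n$ the first (Hecke-type) sum contributes nothing because all its exponents are even, so everything rests on showing that the coefficient $Z^{(m)}(k)$ of the \emph{second} sum alone is nonnegative; that is not a manifestly positive count but requires exhibiting an inclusion between two explicit sets of divisor pairs ($A_{2}\subseteq B_{1}$ in the paper's notation). These are all repairable, but as written the proposal does not yet constitute a proof.
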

In addition to this result, we also consider the case $0\leq n\leq20m.$
\begin{thm}
\label{lemma2.8} Let $m$ and $n$ be two non-negative integers such
that $n\leq20m.$ Then $X^{(m)}(n)$ is nonnegative.
\end{thm}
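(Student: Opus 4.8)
Since Theorem~\ref{L2} already gives $X^{(m)}(n)\ge 0$ for every odd $n$, it suffices to treat even $n$ with $0\le n\le 20m$, and the plan is to reduce \eqref{eq:1-13} to a completely explicit finite expression on this range. The first observation is that only boundedly many terms of the two series in \eqref{eq:1-13} are relevant here: a term of summation index $j$ in the first series affects $[q^{n}]$ only when its least exponent $3j^{2}+(2m+1)j$ is $\le n\le 20m$, which forces $(2m+1)j\le 20m$ and hence $j\le 9$; likewise a term of index $j$ in the second series requires $\tfrac{j(j+1)}{2}+mj\le 20m$, forcing $mj\le 20m$ and hence $j\le 19$ (the value $j=20$ being excluded since $\tfrac{j(j+1)}{2}+mj=20m+210>20m$). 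Consequently, modulo $q^{20m+1}$,
\[
\sum_{n\ge 0}X^{(m)}(n)q^{n}\equiv\frac{1}{1-q^{2}}\left(\sum_{j=1}^{9}\frac{(-1)^{j}q^{3j^{2}+(2m+1)j}}{1-q^{2j}}-\sum_{j=1}^{19}\frac{(-1)^{j}q^{\frac{j(j+1)}{2}+mj}}{1-q^{j}}\right),
\]
a finite sum. Expanding $\tfrac1{1-q^{2}}$ together with each geometric series $\tfrac1{1-q^{2j}}$ and $\tfrac1{1-q^{j}}$ exhibits $X^{(m)}(n)$ as $\sum_{j}(-1)^{j}$ times a count of lattice points $(a,b)\in\mathbb{Z}_{\ge0}^{2}$ on a line segment; on each interval of $n$ delimited by the finitely many thresholds $T^{A}_{j}:=3j^{2}+(2m+1)j$ $(1\le j\le 9)$ and $T^{B}_{j}:=\tfrac{j(j+1)}{2}+mj$ $(1\le j\le 19)$, this becomes a fixed $\mathbb{Z}$-linear combination of $1$ and of floor functions $\lfloor(n-T)/d\rfloor$.

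The plan is then to descend through these windows in order of increasing $n$. For even $n\le m$ no term is active and $X^{(m)}(n)=0$. For $m+1\le n\le 2m+2$ only the $j=1$ term of the second series is active, so
\[
X^{(m)}(n)=[q^{n}]\,\frac{q^{m+1}}{(1-q)(1-q^{2})}=\Bigl\lfloor\frac{n-m-1}{2}\Bigr\rfloor+1\ge 0 ,
\]
and for even $n$ with $2m+4\le n<4m+10$ one activates, in addition, only the $j=1$ term of the first series and the $j=3$ term of the second, giving
\[
X^{(m)}(n)=[q^{n}]\!\left(\frac{q^{m+1}}{(1-q)(1-q^{2})}-\frac{q^{2m+4}}{(1-q^{2})^{2}}+\frac{q^{3m+6}}{(1-q^{2})(1-q^{3})}\right)\ge\Bigl\lfloor\frac{m+3}{2}\Bigr\rfloor ,
\]
since the first two terms contribute exactly $\lfloor\tfrac{m+3}{2}\rfloor$ for even $n\ge 2m+4$ and the third has nonnegative coefficients. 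For $n\ge 4m+10$ one keeps switching terms on at their thresholds and verifies the resulting closed form window by window. The underlying reason nonnegativity persists is a density statement: writing $n=\alpha m$, the closed form yields $X^{(m)}(n)=c(\alpha)\,m+O(1)$, where $O(1)$ is bounded by an absolute constant and
\[
c(\alpha)=\sum_{\substack{j\ge 1\\ 2j\le\alpha}}(-1)^{j}\Bigl(\frac{\alpha}{2j}-1\Bigr)+\sum_{\substack{j\ \mathrm{odd}\\ j\le\alpha}}\Bigl(\frac{\alpha}{2j}-\frac12\Bigr)-\sum_{\substack{4\mid j\\ j\le\alpha}}\Bigl(\frac{\alpha}{j}-1\Bigr)
\]
is a continuous, piecewise-linear function whose breakpoints in $[0,20]$ are all integers; evaluating $c$ at $\alpha=1,2,\dots,20$ shows $c\equiv 0$ on $[0,1]$ and $c(\alpha)\ge\tfrac12$ for $3\le\alpha\le 20$. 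Together with the monotone/lattice-point comparison that the positive contributions outgrow the negative ones on each window, this forces $X^{(m)}(n)\ge 0$ once $m$ exceeds an explicit bound, and the finitely many remaining $(m,n)$ with $n\le 20m$ are disposed of by the same closed forms.

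I expect the real work, and the main obstacle, to lie in carrying out this last step rigorously rather than merely asymptotically. Although there are at most $9+19=28$ thresholds in $[0,20m]$, their relative order depends on $m$ — for example $T^{A}_{j}$ exceeds $T^{B}_{2j}$ for all $m$ but lies below $T^{B}_{2j+1}$ only for $m>(j-1)^{2}-2$ — so the window decomposition, and hence the closed form valid on each window, is not uniform in $m$; one must isolate the finitely many ranges of $m$ on which the order is fixed and handle each separately. A further subtlety is that the running coefficient can momentarily decrease: in the narrow window $T^{B}_{4}\le n<T^{A}_{2}$ (that is, $4m+10\le n<4m+14$) a "bad" term — the $j=4$ term of the second series, which contributes negatively — switches on before its positive counterpart (the $j=2$ term of the first series), and there one must check that the value $\lfloor\tfrac{m+3}{2}\rfloor$ already accumulated dominates the bounded loss; similar care is needed whenever an $A$-term of odd index $\ge 3$ or a $B$-term of index divisible by $4$ switches on. Pinning down the absolute constant hidden in the $O(1)$ so that the residual direct check is over an explicit finite set of $(m,n)$, and organizing the finitely many threshold orderings, is where the bulk of the bookkeeping sits; conceptually, however, everything reduces to the single inequality $c(\alpha)\ge 0$ on $[0,20]$, and unlike the large-$n$ regime of Theorem~\ref{L2}, the range $n\le 20m$ is exactly the one in which the few relevant terms can be compared directly, with no input beyond elementary lattice-point counting.
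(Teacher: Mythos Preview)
Your truncation step is correct and is exactly where the paper begins: for $n\le 20m$ only the indices $j\le 9$ of the first series and $j\le 19$ of the second contribute, so $X^{(m)}(n)$ is the coefficient of $q^{n}$ in the finite expression you call $T(q)$. The early windows you compute ($n\le m$, $m+1\le n\le 2m+2$, $2m+4\le n<4m+10$) are also handled correctly.

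Where your proposal and the paper part ways is in everything after the truncation. You propose a window--by--window lattice-point count governed by an asymptotic density $c(\alpha)$, and you yourself identify the obstacle: the relative order of the $28$ thresholds $T^{A}_{j},T^{B}_{j}$ is not uniform in $m$, the constant in your $O(1)$ must be made explicit to bound the residual finite check, and none of this bookkeeping is actually carried out. So as written this is a plausible plan, not a proof; the gap is exactly the ``bulk of the bookkeeping'' you flag.

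The paper bypasses this difficulty completely by a direct algebraic rearrangement of $T(q)$ that is uniform in $m$. It groups the $28$ terms by the odd part of the index into blocks $T_{1},T_{3},T_{5},T_{7},T_{9},T'$; the blocks $T_{7},T_{9},T'$ are shown to have nonnegative coefficients by the elementary identity $\dfrac{q^{a}-q^{b}}{1-q^{d}}=\sum_{k}q^{a+kd}\ge 0$ for $a<b$, $d\mid(b-a)$. The remaining piece $T_{1}+T_{3}+T_{5}$ is then expanded and each negative monomial $q^{b}/(1-q^{2})$ is paired with a surviving positive monomial $q^{a}/(1-q^{2})$ with $a<b$ and $a\equiv b\pmod 2$ (for instance $q^{2+2m}$ with $q^{10+4m}$, $q^{6+6m}$ with $q^{36+8m}$, and so on through ten explicit pairs), so that each difference has nonnegative coefficients. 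Because the exponents are all affine in $m$ with the ``positive'' exponent having the smaller slope and smaller constant, every pairing is valid for all $m\ge 0$ simultaneously, and no threshold ordering, no density function $c(\alpha)$, and no residual finite check is needed. Your asymptotic inequality $c(\alpha)\ge 0$ is morally the shadow of this pairing, but the paper's argument replaces the analysis by a single combinatorial matching.
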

We next consider the nonnegativity of $M_{C5}(m,n).$ In order to
prove the nonnegativity of $M_{C5}(m,n),$ we first study the nonnegativity
of $Y^{(m)}(n),$ where $Y^{(m)}(n)$ is defined by

\begin{equation}
\sum_{n\geq0}Y^{(m)}(n)q^{n}:=\sum_{n=1}^{\infty}\frac{(-1)^{n}q^{n(n+1)+2mn}}{1-q^{2n}}-\sum_{n=1}^{\infty}\frac{(-1)^{n}q^{\frac{n(n+1)}{2}+mn}}{1-q^{n}}.\label{eq:1-1}
\end{equation}
We convert the nonnegativity of $Y^{(m)}(n)$ to the consideration
of the numbers of four sets in $\mathbb{N}^{2}.$ From this we can
obtain the nonnegativity of $Y^{(m)}(n).$
\begin{thm}
\label{L1} Let $m$ be a non-negative integer. Then $Y^{(m)}(n)$
is nonnegative for all $n\geq0.$
\end{thm}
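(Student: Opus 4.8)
The plan is to express the coefficient $Y^{(m)}(n)$ as a signed count of lattice points and then show the positive contributions dominate the negative ones by exhibiting an explicit injection. First I would expand each of the two series in \eqref{eq:1-1} as a double sum. Writing $\frac{1}{1-q^{2n}}=\sum_{k\geq 0}q^{2nk}$ and $\frac{1}{1-q^{n}}=\sum_{k\geq 0}q^{nk}$, the first sum becomes $\sum_{n\geq 1}\sum_{k\geq 0}(-1)^{n}q^{n(n+1)+2mn+2nk}$ and the second becomes $\sum_{n\geq 1}\sum_{k\geq 0}(-1)^{n}q^{\frac{n(n+1)}{2}+mn+nk}$. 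Hence $Y^{(m)}(n)$ equals the number of pairs $(n,k)$ with $n\geq 1$, $k\geq 0$, $n$ even, contributing to the first exponent equal to $N$, minus those with $n$ odd, and similarly for the second sum; combining, $Y^{(m)}(N)$ is a $\pm 1$-weighted count over four sets $S_1^{+},S_1^{-},S_2^{+},S_2^{-}\subseteq\mathbb{N}^{2}$, where $S_1^{\pm}$ records solutions of $n(n+1)+2mn+2nk=N$ with $n$ of the appropriate parity and $S_2^{\pm}$ those of $\tfrac{n(n+1)}{2}+mn+nk=N$. The claim $Y^{(m)}(N)\geq 0$ then amounts to $|S_1^{+}|+|S_2^{-}|\geq |S_1^{-}|+|S_2^{+}|$.

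The key step is to build an injection from the "negative" set $S_1^{-}\sqcup S_2^{+}$ into the "positive" set $S_1^{+}\sqcup S_2^{-}$. The natural handle is that the second Diophantine condition $\tfrac{n(n+1)}{2}+mn+nk=N$ can be rewritten as $n\bigl(\tfrac{n+1}{2}+m+k\bigr)=N$, i.e. $n$ divides $N$ and the cofactor determines $k$ uniquely once $n$ is fixed; similarly the first condition reads $2n\bigl(\tfrac{n+1}{2}+m+k\bigr)=N$, so it is essentially the same equation with $n$ replaced by $2n$ and $N$ halved. This suggests the map $(n,k)\mapsto$ (pair with $n$ doubled) sending $S_2$-solutions to $S_1$-solutions: a solution in $S_2^{+}$ (so $n$ odd) maps to a solution with even first coordinate $2n$, landing in $S_1$, and since $2n\equiv 0\pmod 2$ that lands in $S_1^{+}$ — wait, I must track the sign carefully, since $(-1)^{2n}=+1$ always, so $S_1$ has only positive contributions from this viewpoint unless I split by the parity of $n$ itself rather than of $2n$. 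The correct bookkeeping is: in the first series the summation index is $n$ with sign $(-1)^{n}$ but the exponent involves $2n$, so I should reindex the \emph{first} sum to make the structural parallel visible, matching terms of the first sum with index $n$ against terms of the second sum with index $2n$ (even) and separately against odd indices. Carrying this out, every odd-index term of the second series ($S_2$ with $n$ odd) should cancel against a first-series term, leaving $Y^{(m)}(N)$ as a manifestly nonnegative count of even-index second-series terms that survive — and one checks these are exactly the terms with $(-1)^{n}=+1$.

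After the cancellation, what remains to be verified is that the leftover terms all carry sign $+1$ and that no overcounting occurs, i.e. that the pairing is genuinely a bijection on the part that cancels and an injection on the rest. Concretely I would: (i) set up the four sets precisely with the parity labels; (ii) define $\varphi(n,k)=(2n, k')$ with $k'$ forced by the exponent equation, check $\varphi$ is well-defined into $S_1$, injective, and reverses/preserves the sign appropriately; (iii) identify the image and complement, and observe the complement consists only of positively-signed lattice points; (iv) conclude $Y^{(m)}(N)\geq 0$ for all $N\geq 0$, with equality exactly when the complement is empty. The main obstacle I anticipate is the sign bookkeeping in step (ii): because the exponents $n(n+1)+2mn$ versus $\tfrac{n(n+1)}{2}+mn$ differ by a factor of $2$ only after the reindexing $n\mapsto 2n$ is applied, one must be scrupulous that the index whose parity governs the sign is handled consistently on both sides, and that the constraint $k\geq 0$ (equivalently $\tfrac{n+1}{2}+m+k\geq$ something) is preserved under $\varphi$ — the map could in principle send an admissible pair to one violating $k'\geq 0$, so a short inequality check ($N/(2n)\geq \tfrac{2n+1}{2}+m$, which follows from the corresponding bound $N/n\geq \tfrac{n+1}{2}+m$ for the source pair) is needed to close this gap. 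Once that is in place the nonnegativity follows immediately, and this is genuinely simpler than the $X^{(m)}(n)$ case because here no appeal to Jarník's lattice-point estimate is required.
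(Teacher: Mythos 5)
Your opening reduction is sound and is in fact the paper's own starting point: expanding the geometric series turns $Y^{(m)}(n)$ into a signed count of solutions of $t(t+1+2m+2l)=n$ (first series, sign $(-1)^t$) and of $t(t+1+2m+2l)=2n$ (second series, sign $-(-1)^t$), so that one needs an injection from the negatively-signed solutions (first series with $t$ odd, second series with $t$ even) into the positively-signed ones (first series with $t$ even, second series with $t$ odd). The gap is in the injection itself. (i) The negative set has two pieces of different nature and they require two \emph{different} matchings; a single map $\varphi(t,l)=(2t,l')$ cannot serve both. (ii) Matching exponents between a second-series solution of index $u$ and a first-series solution of index $u/2$ forces the shift $l$ to change by $u/4$, so this matching is only defined when $4\mid u$; the even-index second-series solutions with $u\equiv 2\pmod 4$ --- which is exactly what occurs whenever $n$ is odd --- are left unmatched, and they must instead be cancelled \emph{inside the second series} against odd-index terms. (iii) The inequality you propose to guarantee $l'\geq 0$ is false: $N/t\geq\frac{t+1}{2}+m$ does not imply $N/(2t)\geq\frac{2t+1}{2}+m$ (take $m=0$, $N=3$: the admissible pair $t=2$, $l=0$ has $N/(2t)=3/4<5/2$; its $-1$ is in fact cancelled by the second-series pair $t=1$, $l=2$, i.e.\ the index is halved and one never leaves the second series). (iv) The sign bookkeeping at the end is backwards: even-index second-series terms carry sign $-(-1)^{\mathrm{even}}=-1$, so the surviving terms cannot be ``even-index second-series terms''; the survivors must be even-index first-series and odd-index second-series terms.

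The missing idea is a $2$-adic one. Since $s:=1+2m+2l$ is odd, in $t(t+s)=n$ exactly one of $t,\,t+s$ is even and that factor must absorb the \emph{entire} power of $2$ dividing $n$. Writing $n=2^{e}N$ with $N$ odd, every solution is encoded by a divisor pair $d_1d_2=N$, yielding four sets $A_1,B_2$ (first series, $t$ even resp.\ odd, with conditions $d_2-2^{e}d_1\geq 2m+1$ odd resp.\ $2^{e}d_2-d_1\geq 2m+1$ odd) and $A_2,B_1$ (second series, same with $2^{e+1}$), so that $Y^{(m)}(n)=\#A_1+\#B_1-\#A_2-\#B_2$. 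The injections are then transparent set containments: for $e\geq 1$ one has $A_2\subseteq A_1$ and $B_2\subseteq B_1$; for $e=0$ one has $A_1=B_2=\emptyset$ and $A_2\subseteq B_1$. Note that the map realizing $A_2\subseteq A_1$ \emph{halves} the index, the map realizing $B_2\subseteq B_1$ keeps the index fixed and switches series, and in the $e=0$ case the cancellation happens entirely within the second series. Without this case split on $e$ and the two (three) distinct matchings, your argument does not close.
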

Combining Theorems \ref{L2}, \ref{lemma2.8} and \ref{L1}, we can
confirm Conjecture \ref{conj1.1}\footnote{In \cite{CHM}, Chan, Ho and Mao observed an interesting phenomenon
that fewer terms in the denominator of some theta series are needed
and the corresponding theta series still possesses nonnegative coefficients.
See also Zhou's paper \cite{Z}. This observation can also apply to
our proof of Conjecture \ref{conj1.1}. See the proof of Conjecture
\ref{conj1.1} in Section \ref{sec:Pr} for more details.}.
\begin{thm}
\label{t} Conjecture \ref{conj1.1} is true.
\end{thm}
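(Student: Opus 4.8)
The plan is to deduce Theorem \ref{t} by combining the three preceding results (Theorems \ref{L2}, \ref{lemma2.8} and \ref{L1}) with the structural identities of Garvan and Jennings-Shaffer that express $S_{C1}(z,q)$ and $S_{C5}(z,q)$ in terms of the auxiliary series $\sum X^{(m)}(n)q^n$ and $\sum Y^{(m)}(n)q^n$. First I would treat $M_{C5}(m,n)$, which is the easier case: one writes $S_{C5}(z,q)$ over a common denominator and extracts, via the known Hecke--Rogers / single-series formula for $S_{C5}$, a relation of the shape $\sum_n M_{C5}(m,n)q^n = (\text{positive power series})\cdot\bigl(\sum_n Y^{(|m|)}(n)q^n\bigr)$, using the symmetry $M_{C5}(m,n)=M_{C5}(-m,n)$ to reduce to $m\ge 0$. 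Since the ``positive power series'' in question (a ratio of $q$-products arising from $\tfrac{(q;q)_\infty(q;q^2)_\infty}{\text{something}}$) has nonnegative coefficients and $Y^{(m)}(n)\ge 0$ by Theorem \ref{L1}, the Cauchy product is nonnegative, giving $M_{C5}(m,n)\ge 0$ for all $m,n$.

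Next I would handle $M_{C1}(m,n)$, again reducing to $m\ge 0$ by symmetry. Here the relevant identity expresses $\sum_n M_{C1}(m,n)q^n$ as a nonnegative power series times $\sum_n X^{(|m|)}(n)q^n$ (the extra factor $\tfrac{1}{1-q^2}$ visible in \eqref{eq:1-13} is exactly the one absorbed into this step). The obstacle is that Theorem \ref{L2} only gives $X^{(m)}(n)\ge 0$ unconditionally for $n$ odd, and for $n$ even only when $n\ge f(m)$; meanwhile Theorem \ref{lemma2.8} covers $n\le 20m$. So the argument splits into three ranges: (i) $n$ odd --- covered by Theorem \ref{L2}; (ii) $n$ even with $n\ge f(m)$ --- covered by Theorem \ref{L2}; (iii) the remaining even $n$ with $n<f(m)$. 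I would check that $f(m)\le 20m$ for all $m$ large enough (this is a one-line inequality: $f(m)$ grows linearly in $m$ with slope $2/\log 2<20$, plus lower-order terms, so $f(m)<20m$ once $m$ exceeds a small explicit constant), so that ranges (ii) and (iii) together are subsumed by Theorems \ref{L2} and \ref{lemma2.8}; the finitely many small values of $m$ not covered this way are dispatched by a finite computation. Hence $X^{(m)}(n)\ge 0$ for every $m\ge 0$ and every $n\ge 0$, and feeding this into the product identity yields $M_{C1}(m,n)\ge 0$.

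The main obstacle I anticipate is bookkeeping rather than deep mathematics: one must verify carefully that the coefficient-wise nonnegativity really does pass through the product identities --- i.e. that the prefactors $\tfrac{(q;q^2)_\infty (q;q)_\infty}{(q;q)_n(q;q^2)_n}$-type quantities, after the telescoping that produces $X^{(m)}$ and $Y^{(m)}$, genuinely have nonnegative $q$-expansions --- and that the index shift relating $M_{C1}(m,n)$ to $X^{(m)}$ (resp.\ $M_{C5}(m,n)$ to $Y^{(m)}$) is the one claimed in \cite{GJ}. I would also invoke, in a remark, the Chan--Ho--Mao observation \cite{CHM} that one may drop some denominator factors while preserving nonnegativity, which is what lets us work with the comparatively simple series \eqref{eq:1-13} and \eqref{eq:1-1} rather than the full product. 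Once these reductions are in place, Theorem \ref{t} follows immediately: for all $m\in\mathbb{Z}$ and $n\in\mathbb{N}$, both $M_{C1}(m,n)$ and $M_{C5}(m,n)$ are nonnegative, which is precisely Conjecture \ref{conj1.1}.
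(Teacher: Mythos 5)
Your proposal follows essentially the same route as the paper: reduce to $m\ge 0$ by symmetry, write each generating function as a manifestly nonnegative $q$-product (here $1/(q^{2};q^{2})_{\infty}$, with the factor $1/(1-q^{2})$ already absorbed into the definition of $X^{(m)}$) times $\sum_{n}Y^{(m)}(n)q^{n}$ resp.\ $\sum_{n}X^{(m)}(n)q^{n}$, and then close the even-$n$ gap $20m<n<f(m)$ (which the paper shows is nonempty only for $0\le m\le 120$) by a finite computer check. The only slip is that the asymptotic slope of $f(m)$ is $4/\log 2\approx 5.77$ rather than $2/\log 2$, which does not affect the conclusion that $f(m)<20m$ for all sufficiently large $m$.
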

In the next section, we provide some auxiliary results, which are
crucial to the derivation of Theorems \ref{L2}, \ref{lemma2.8} and
\ref{L1}. Section \ref{sec:P} is devoted to our proofs of Theorems
\ref{L2}, \ref{lemma2.8} and \ref{L1}. In Section \ref{sec:Pr},
we show Theorem \ref{t}.

\section{Auxiliary results}

We begin this section by recalling the following result of Jarnik
concerning the problem of the number of lattice points inside a closed
curve.
\begin{lem}
\label{lemma2.2} (Jarnik, see \cite[Theorem 9.2, p.123]{H}) Let
$l\geq1$ be the length of a rectifiable simple closed curve and let
$A$ be the area of the region bounded by the curve. If $N$ is the
number of lattice points inside the curve, then 
\[
|N-A|<l.
\]
\end{lem}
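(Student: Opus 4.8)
The plan is to compare the count $N$ with the area $A$ by \emph{tiling the plane with the unit squares centred at the lattice points}. For $P\in\mathbb{Z}^{2}$ set $S_{P}:=P+[-\tfrac12,\tfrac12]^{2}$; these squares have pairwise disjoint interiors and cover $\mathbb{R}^{2}$. Let $G$ be the open region bounded by the curve $\gamma$ (well defined, as $\gamma$ is a simple closed curve, by the Jordan curve theorem), so that $A=\operatorname{area}(G)$ and $N=\sum_{P}[\,P\in G\,]$. Because the squares tile the plane we also have $A=\sum_{P}\operatorname{area}(G\cap S_{P})$, whence
\[
N-A=\sum_{P\in\mathbb{Z}^{2}}\bigl([\,P\in G\,]-\operatorname{area}(G\cap S_{P})\bigr).
\]
The first step is to note that only the \emph{boundary squares} --- those $S_{P}$ actually met by $\gamma$ --- contribute: if $\gamma\cap S_{P}=\emptyset$ then the connected set $S_{P}$ lies wholly inside or wholly outside $G$, so its term vanishes. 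Equivalently, writing $U:=\bigcup_{P\in G}S_{P}$ we have $N=\operatorname{area}(U)$, and $|N-A|\le\operatorname{area}(U\triangle G)$, a region supported on the boundary squares.

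The main step is then the length--area estimate $\operatorname{area}(U\triangle G)<l$. The natural route is to charge the area of $U\triangle G$ against arc length: since the tiling is a partition, arc length is additive, $\sum_{P}\lambda_{P}=l$ where $\lambda_{P}:=\operatorname{length}(\gamma\cap S_{P})$, so it would suffice to bound each boundary square's contribution $\bigl|[\,P\in G\,]-\operatorname{area}(G\cap S_{P})\bigr|$ by $\lambda_{P}$. For a square that $\gamma$ genuinely \emph{crosses}, an elementary isoperimetric computation inside a unit square shows that the area cut off from the square is comfortably dominated by the length of the separating arc, so these squares cause no trouble.

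The \textbf{main obstacle} is exactly the degenerate configuration excluded by the hypothesis $l\ge1$, and it shows that no purely local, square-by-square bound can work. If a whole small loop of $\gamma$ sits inside one square $S_{P}$ --- say a tiny circle of circumference $l<1$ --- it encloses area $\approx0$ yet forces a discrepancy $\approx1$ on that square, so an inequality of the shape ``contribution $\le c\,\lambda_{P}$'' is false locally. The hypothesis $l\ge1$ is precisely what rules this out, and the proof must exploit the \emph{global} length: every point of $U\triangle G$ lies in a square met by $\gamma$, and because $\gamma$ is connected of length $l\ge1$ and must separate the interior from the exterior, the number of squares it can meet --- and hence $\operatorname{area}(U\triangle G)$ --- is controlled by $l$. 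Obtaining the \emph{sharp} constant $1$ here (rather than some absolute constant $C$ coming from a crude count of boundary squares) is the delicate heart of the matter and is where Jarnik's refined counting enters. It remains to supply two technical points: \emph{strictness} of the inequality, which follows because the per-square bounds cannot all be simultaneously tight, and the reduction of the merely \emph{rectifiable} hypothesis to the smooth intuition above, handled by taking an arc-length parametrisation so that $\gamma$ meets almost every tile edge in finitely many points and arc length is countably additive over the tiling.
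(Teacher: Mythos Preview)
The paper does not prove this lemma at all: it is simply quoted as a classical result of Jarnik, with a reference to Hua's textbook \cite[Theorem 9.2, p.123]{H}, and then used as a black box in Lemmas~\ref{Lemma2.4} and~\ref{Lemma2.5}. So there is no ``paper's own proof'' to compare against.

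As for your sketch on its own merits: the set-up via unit squares centred at lattice points and the reduction to boundary squares are correct and standard, and you are right to flag that a purely local bound ``contribution $\le\lambda_{P}$'' fails (your tiny-loop example). However, your write-up stops precisely at the point that matters. You say that ``the number of squares $\gamma$ can meet\ldots is controlled by $l$'' and that obtaining the sharp constant $1$ ``is where Jarnik's refined counting enters,'' but you do not actually carry out that counting. That is the entire content of the lemma; everything before it is bookkeeping. In other words, what you have written is an accurate description of \emph{where} the difficulty lies, not a proof that overcomes it. If you want a self-contained argument, the classical route (as in Hua) is to slice the plane into horizontal unit strips, observe that within a strip of height $1$ the curve's horizontal extent is bounded by its length in that strip, and sum; this gives the number of boundary squares $\le l$ with strict inequality coming from the fact that the curve cannot be purely horizontal in every strip. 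Your outline is compatible with this, but as written it is a plan rather than a proof.
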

The next result concerns the number of lattice points with $y$-coordinates
being odd.
\begin{lem}
\label{lemma2.3} Let $N$ represent the number of lattice points
in a bounded region $B,$ and let $M$ denote the number of lattice
points within this region where the $y$-coordinates are odd. Then
\[
\left|\frac{N}{2}-M\right|\leq\sup_{P,Q\in B}|x(P)-x(Q)|+1,
\]
where $x(T)$ denotes the $x$-coordinate of $T.$
\end{lem}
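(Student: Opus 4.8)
The plan is to slice $B$ by the vertical lines $x=j$, $j\in\mathbb{Z}$. For each integer $j$ let $n_j$ be the number of lattice points of $B$ lying on the line $x=j$ and let $m_j$ be the number of those whose $y$-coordinate is odd, so that $N=\sum_j n_j$ and $M=\sum_j m_j$. Since $(-1)^b$ equals $1$ when the integer $b$ is even and $-1$ when $b$ is odd, for each $j$ one has
\[
n_j-2m_j=\sum_{b:\,(j,b)\in B}(-1)^b ,
\]
and hence $N-2M=\sum_{j\in\mathbb{Z}}\bigl(n_j-2m_j\bigr)$. It therefore suffices to bound this columnwise alternating sum.

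The \emph{key observation} is that each nonempty column contributes at most $1$ in absolute value. In the situation in which the lemma is applied, $B$ is the region enclosed by a (convex) simple closed curve, so the lattice points of $B$ on the line $x=j$ are $(j,\beta_j),(j,\beta_j+1),\dots,(j,\beta_j+n_j-1)$ for some integer $\beta_j$; more generally it is enough that the $y$-coordinates occurring in the $j$-th column form an interval of consecutive integers, or a bounded number of such intervals. For $n_j$ consecutive integers the alternating sum $\sum(-1)^b$ equals $0$ if $n_j$ is even and $\pm1$ if $n_j$ is odd, so $|n_j-2m_j|\le 1$, i.e.\ $|m_j-n_j/2|\le\tfrac12$.

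Next I would count the columns that actually meet $B$. Let $j_1<\dots<j_r$ be the integers $j$ for which the $j$-th column is nonempty. Each $j_i$ is the $x$-coordinate of some lattice point of $B$, so $j_r-j_1\le\sup_{P,Q\in B}|x(P)-x(Q)|=:w$; since $j_1,\dots,j_r$ are distinct integers in $[j_1,j_r]$, this gives $r\le j_r-j_1+1\le w+1$. Combining the two estimates,
\[
|N-2M|\le\sum_{i=1}^{r}|n_{j_i}-2m_{j_i}|\le r\le w+1 ,
\]
and dividing by $2$ yields $\left|\tfrac{N}{2}-M\right|\le\tfrac{w+1}{2}\le w+1$, which is the assertion (indeed slightly more; the factor $2$ of slack in the stated bound is exactly what one recovers if each column is only known to meet $B$ in at most two blocks of consecutive integers).

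I do not expect a genuine obstacle here, only one point that must be handled honestly: for a completely arbitrary bounded set the inequality is false — e.g.\ for a union of tiny disks centred at $(0,1),(0,3),(0,5),\dots$ one has $N=M$ with $w$ arbitrarily small — so the proof must, and does in our setting, use that the vertical cross-sections of $B$ are intervals (or a bounded union of intervals). Everything else is elementary bookkeeping.
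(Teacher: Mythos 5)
Your proof is correct and follows essentially the same route as the paper's: slice $B$ into integer columns, note that in each nonempty column the numbers of even-$y$ and odd-$y$ lattice points differ by at most $1$, bound the number of nonempty columns by $\sup_{P,Q\in B}|x(P)-x(Q)|+1$, and divide by $2$ (both arguments in fact prove the stronger bound with an extra factor $\tfrac12$). Your closing caveat is well taken and worth recording: the columnwise estimate $\left|\left(N_{x_{0}}-M_{x_{0}}\right)-M_{x_{0}}\right|\leq1$ is exactly where the interval structure of the vertical cross-sections is used silently, and while $\Omega$ and $\Omega'$ are not actually convex (the hyperbolic boundary $xy<\tfrac{n+1}{2}$ bounds them from the non-convex side), their vertical sections are intervals, which is all your argument requires.
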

\begin{proof}
Let
\[
m_{1}:=\min\left\{ x|(x,y)\in B\;and\;(x,y)\in\mathbb{Z}^{2}\right\} 
\]
and 
\[
m_{2}:=\max\left\{ x|(x,y)\in B\;and\;(x,y)\in\mathbb{Z}^{2}\right\} .
\]
For each $x_{0}\in[m_{1},m_{2}]$ and $x_{0}\in\mathbb{Z},$ let $N_{x_{0}}$
be the number of lattice points in $B\cap\{(x_{0},y)|y\in\mathbb{R}\}$
and let $M_{x_{0}}$ be the number of lattice points in $B\cap\{(x_{0},y)|y\in\mathbb{R}\}$
with the $y$-coordinates being odd. Then $N_{x_{0}}-M_{x_{0}}$ is
the number of lattice points in $B\cap\{(x_{0},y)|y\in\mathbb{R}\}$
with the $y$-coordinates being even and so 
\[
\left|\left(N_{x_{0}}-M_{x_{0}}\right)-M_{x_{0}}\right|\leq1.
\]
It is easy to see that $N-M$ is the number of lattice points in the
region $B$ with the $y$-coordinates being even. Thus,
\begin{align*}
\left|\left(N-M\right)-M\right| & =\left|\sum_{\substack{x_{0}\in[m_{1},m_{2}]\\
x_{0}\in\mathbb{Z}
}
}\left(\left(N_{x_{0}}-M_{x_{0}}\right)-M_{x_{0}}\right)\right|\\
 & \leq\sum_{\substack{x_{0}\in[m_{1},m_{2}]\\
x_{0}\in\mathbb{Z}
}
}\left|\left(N_{x_{0}}-M_{x_{0}}\right)-M_{x_{0}}\right|\\
 & \leq m_{2}+1-m_{1}\\
 & \leq\sup_{P,Q\in B}|x(P)-x(Q)|+1.
\end{align*}
This implies that
\[
\left|\frac{N}{2}-M\right|\leq\sup_{P,Q\in B}|x(P)-x(Q)|+1,
\]
which finishes the proof.
\end{proof}
From Lemmas \ref{lemma2.2} and \ref{lemma2.3} we can derive the
following results.
\begin{lem}
\label{Lemma2.4} Let $m$ and $n$ be two non-negative integers and
\[
\Omega:=\left\{ (x,y)\bigg|xy<\frac{n+1}{2},y-6x<2m,y-4x>2m,x>0,y>0\right\} 
\]
and let $M_{1}^{(m)}(n)$ denote the number of lattice points in $\Omega$
with odd $y$-coordinates. Then
\begin{align*}
M_{1}^{(m)}(n) & <\frac{1}{4}(n+1)\left(\log\left(\frac{3\left(\sqrt{1+2t}-1\right)}{2\left(\sqrt{1+3t}-1\right)}\right)+\left(\frac{1}{\sqrt{1+3t}+1}+\frac{-1}{\sqrt{1+2t}+1}\right)\right)\\
 & \quad+2.2\sqrt{n+1}+1,
\end{align*}
where $t$ is defined by\footnote{The value of $t$ is set to be $+\infty$ if $m=0.$}
\begin{equation}
t=\frac{n+1}{m^{2}}.\label{eq:t}
\end{equation}
\end{lem}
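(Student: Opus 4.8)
The plan is to realize $M_1^{(m)}(n)$ as the number of lattice points with odd $y$-coordinate in the bounded planar region $\Omega$, and then apply the two preparatory lemmas: Lemma \ref{lemma2.2} (Jarnik) to count \emph{all} lattice points in $\Omega$ up to an error controlled by the perimeter, and Lemma \ref{lemma2.3} to pass from that count to the count with odd $y$-coordinate, up to an error controlled by the horizontal extent of $\Omega$. First I would check that $\Omega$ is a bounded region whose boundary is a rectifiable simple closed curve: it is the region in the open first quadrant lying below the hyperbola $xy=\frac{n+1}{2}$ and between the two lines $y=6x+2m$ and $y=4x+2m$. These three arcs (plus, if $m=0$, the portion near the origin) bound a curvilinear triangle; one must identify the two intersection points of each line with the hyperbola and verify the arcs close up. The area $A$ of $\Omega$ is then computed by an elementary integral: writing the hyperbola as $y=\frac{n+1}{2x}$, the region is swept as $x$ runs between the relevant $x$-coordinates, and $A=\int \big(\min(6x+2m,\tfrac{n+1}{2x})-(4x+2m)\big)\,dx$ over the appropriate range. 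Substituting $t=(n+1)/m^2$ and simplifying, this integral produces exactly the main term
\[
\frac{n+1}{4}\left(\log\!\left(\frac{3\bigl(\sqrt{1+2t}-1\bigr)}{2\bigl(\sqrt{1+3t}-1\bigr)}\right)+\left(\frac{1}{\sqrt{1+3t}+1}-\frac{1}{\sqrt{1+2t}+1}\right)\right),
\]
with the $m=0$ case handled by letting $t\to+\infty$, in which case the region is bounded by the two lines through the origin and the hyperbola and the area reduces to $\frac{n+1}{4}\log\frac{3}{2}$.

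Next I would estimate the perimeter $\ell$ of $\partial\Omega$. Each line segment has length bounded by (a constant times) its horizontal extent, and the hyperbolic arc from $(x_1,y_1)$ to $(x_2,y_2)$ on $xy=\frac{n+1}{2}$ has length at most $(x_2-x_1)+(y_1-y_2)$ by monotonicity (the arc length of a monotone convex curve is at most the sum of its horizontal and vertical extents). All of $x_1,x_2,y_1,y_2$ are $O(\sqrt{n+1})$: the hyperbola meets $y=cx$ at $x=\sqrt{(n+1)/(2c)}$, $y=\sqrt{c(n+1)/2}$, so every relevant coordinate is at most $\sqrt{3(n+1)/2}<1.23\sqrt{n+1}$ (when $c\le 6$). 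Summing the pieces gives a bound of the shape $\ell\le C\sqrt{n+1}$; one then optimizes the constant so that, combined with Lemma \ref{lemma2.3}'s extra $\sup_{P,Q\in\Omega}|x(P)-x(Q)|$ (which is again $O(\sqrt{n+1})$) and the $+1$, the total error is exactly $2.2\sqrt{n+1}+1$. Concretely, Lemma \ref{lemma2.2} gives $|N-A|<\ell$ where $N$ is the total number of lattice points in $\Omega$; Lemma \ref{lemma2.3} gives $\bigl|\tfrac{N}{2}-M_1^{(m)}(n)\bigr|\le \sup|x(P)-x(Q)|+1$; hence
\[
M_1^{(m)}(n)<\frac{A}{2}+\frac{\ell}{2}+\sup_{P,Q\in\Omega}|x(P)-x(Q)|+1,
\]
and since $\tfrac{A}{2}$ is precisely the displayed main term, it remains only to check $\tfrac{\ell}{2}+\sup|x(P)-x(Q)|\le 2.2\sqrt{n+1}$. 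The horizontal extent of $\Omega$ is at most the largest $x$-coordinate occurring, which is $\sqrt{(n+1)/8}$ (where $y=4x$ meets the hyperbola), so $\sup|x(P)-x(Q)|\le \tfrac{1}{\sqrt 8}\sqrt{n+1}\approx 0.354\sqrt{n+1}$; and $\ell$ is bounded by twice the sum of the horizontal and vertical extents of $\Omega$, i.e.\ at most $2\bigl(\sqrt{(n+1)/8}+\sqrt{3(n+1)/2}\bigr)\le 3.7\sqrt{n+1}$, giving $\tfrac{\ell}{2}\le 1.85\sqrt{n+1}$; the two contributions sum to about $2.2\sqrt{n+1}$, matching the claim.

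The main obstacle is the bookkeeping in the area computation and the verification that the constant $2.2$ is actually achieved: one has to be careful about which of the three arcs bound $\Omega$ in which range of $x$ (the line $y=6x+2m$ lies above $y=4x+2m$, and the hyperbola cuts across both, so $\Omega$ is genuinely a three-sided curvilinear region and the area integral naturally splits at the point where $6x+2m=\tfrac{n+1}{2x}$), and then to simplify the resulting logarithms and square roots into the stated closed form using $t=(n+1)/m^2$. A secondary subtlety is the degenerate case $m=0$, where $\Omega$ is unbounded toward the origin only in the sense that the two lines pass through $(0,0)$ but the region still has finite area and finite perimeter (the lines meet at the origin, closing the curve), so Jarnik's lemma still applies; one checks the formula by the limit $t\to+\infty$, under which $\frac{\sqrt{1+2t}-1}{\sqrt{1+3t}-1}\to\sqrt{2/3}$ and the two reciprocal terms vanish, recovering $\frac{n+1}{4}\log\frac32$. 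Once the area is pinned down and the perimeter bound is organized, the inequality follows by assembling the two cited lemmas as above.
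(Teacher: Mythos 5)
Your overall strategy is exactly the paper's: compute the area $A(\Omega)$ of the curvilinear triangle, bound the perimeter $\ell$, and combine Lemma \ref{lemma2.2} with Lemma \ref{lemma2.3} via $M_{1}^{(m)}(n)<\frac{A}{2}+\frac{\ell}{2}+\sup_{P,Q\in\Omega}|x(P)-x(Q)|+1$; the area computation, the identification of $\frac{A}{2}$ with the displayed main term, and the treatment of the $m=0$ limit are all fine. The genuine gap is in your verification of the error constant $2.2$. The topmost point of $\Omega$ is where $y=6x+2m$ meets the hyperbola, and its height above the corner $(0,2m)$ is $\sqrt{m^{2}+3(n+1)}-m=\frac{3(n+1)}{\sqrt{m^{2}+3(n+1)}+m}\leq\sqrt{3(n+1)}\approx1.732\sqrt{n+1}$; your figure $\sqrt{3(n+1)/2}<1.23\sqrt{n+1}$ comes from substituting $c=3$ rather than $c=6$ into your own formula $y=\sqrt{c(n+1)/2}$. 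With the correct vertical extent, your bounding-box estimate yields $\ell\leq2\left(\sqrt{(n+1)/8}+\sqrt{3(n+1)}\right)\approx4.17\sqrt{n+1}$, hence $\frac{\ell}{2}+\sup|x(P)-x(Q)|\approx(2.09+0.35)\sqrt{n+1}=2.44\sqrt{n+1}$, which overshoots the claimed $2.2\sqrt{n+1}$. So the crude ``twice width plus height'' perimeter bound is not sharp enough to prove the lemma as stated.

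The repair is to estimate the three boundary arcs separately, as the paper does. The two straight sides have Euclidean lengths exactly $\sqrt{17}\,x_{2}$ and $\sqrt{37}\,x_{3}$ (slopes $4$ and $6$), which are at most $\sqrt{17/8}\,\sqrt{n+1}+\sqrt{37/12}\,\sqrt{n+1}\approx3.22\sqrt{n+1}$, noticeably less than the $5x_{2}+7x_{3}$ your bounding box implicitly allows. The hyperbolic arc, parametrized by $y$, has length $\int_{y_{2}}^{y_{3}}\sqrt{1+\frac{(n+1)^{2}}{4y^{4}}}\,dy\leq(y_{3}-y_{2})\sqrt{1+\frac{(n+1)^{2}}{4y_{2}^{4}}}$, which works out to less than $0.33\sqrt{n+1}$. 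Altogether $\ell<3.6\sqrt{n+1}$, and then $\frac{\ell}{2}+\sup|x(P)-x(Q)|<1.8\sqrt{n+1}+\frac{\sqrt{2}}{4}\sqrt{n+1}<2.2\sqrt{n+1}$ as required. This is not a cosmetic issue: the constant $2.2$ feeds into the $6\sqrt{n+1}$ term in the proof of Theorem \ref{L2}, hence into the definition of $f(m)$ in \eqref{eq:t1-1} and the size of the finite computer check in Section \ref{sec:Pr}, so the sharper perimeter bound (or a corresponding enlargement of all downstream constants) is genuinely needed.
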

\begin{figure}[H]
\caption{Schematic diagram of $\Omega$}
\end{figure}
\includegraphics[scale=0.15]{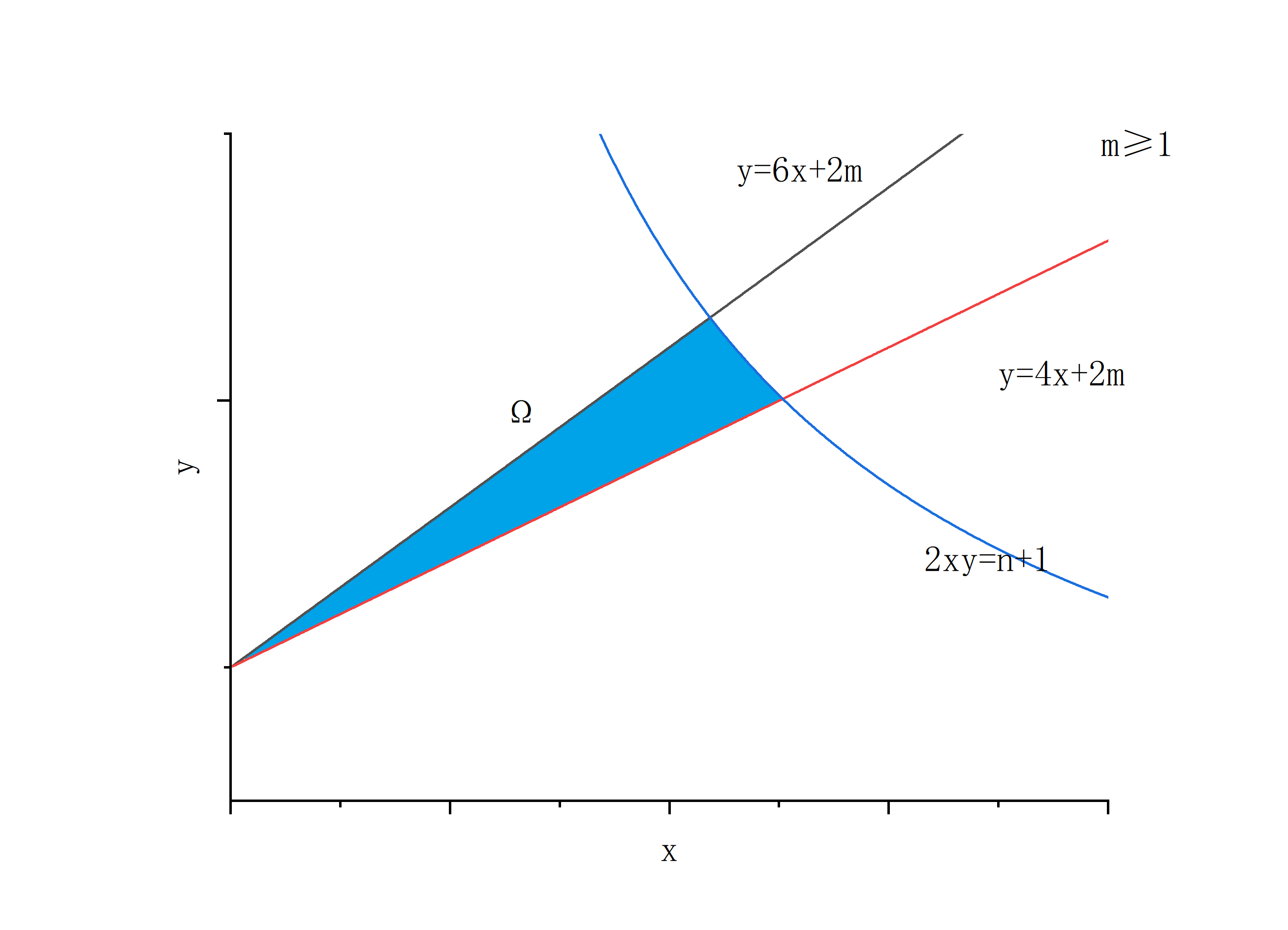} \includegraphics[scale=0.15]{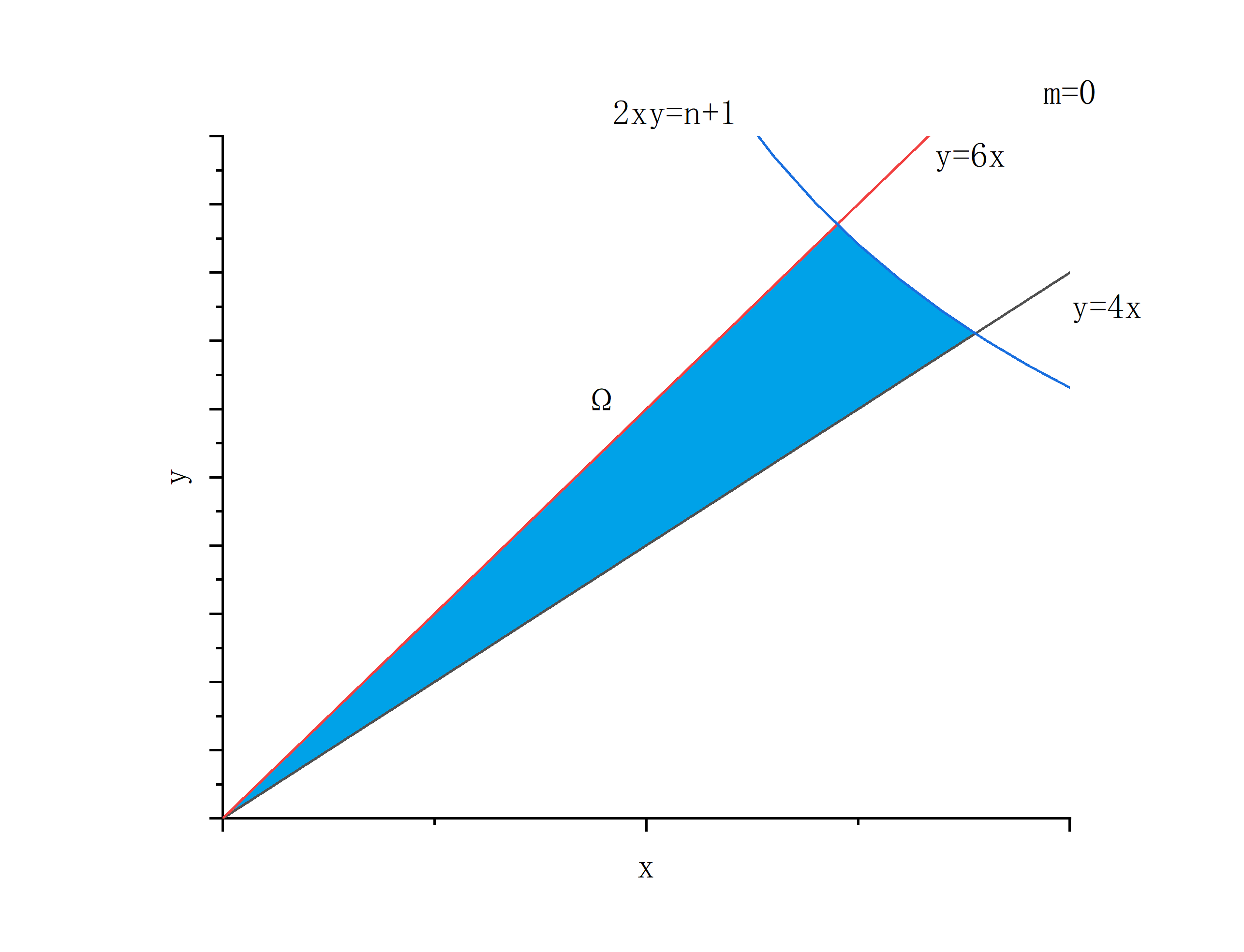}
\begin{proof}
Let $A(\Omega)$ and $l(\Omega)$ denote the area of $\Omega$ and
the length of the boundary curve of $\Omega$ respectively. Set 
\begin{align*}
 & x_{1}=0,\quad y_{1}=2m,\\
 & x_{2}=\frac{-2m+\sqrt{(2m)^{2}+8(n+1)}}{8},\quad y_{2}=\frac{n+1}{2x_{2}},\\
 & x_{3}=\frac{1}{12}\left(\sqrt{(2m)^{2}+12(n+1)}-(2m)\right),\quad y_{3}=\frac{n+1}{2x_{3}}.
\end{align*}
Then
\begin{equation}
\begin{aligned}A(\Omega) & =\int_{y_{2}}^{y_{3}}\frac{n+1}{2y}dy+\frac{(y_{2}-y_{1})x_{2}}{2}-\frac{(y_{3}-y_{1})x_{3}}{2}\\
 & =\frac{1}{2}(n+1)\log\left(\frac{3\left(\sqrt{(2m)^{2}+8(n+1)}-2m\right)}{2\left(\sqrt{(2m)^{2}+12(n+1)}-2m\right)}\right)\\
 & \;-\frac{m}{24}\left(3\sqrt{(2m)^{2}+8(n+1)}-2\sqrt{(2m)^{2}+12(n+1)}-m\right)\\
 & =\frac{1}{2}(n+1)\log\left(\frac{3\left(\sqrt{1+2t}-1\right)}{2\left(\sqrt{1+3t}-1\right)}\right)\\
 & \qquad+\frac{1}{2}(n+1)\left(\frac{1}{\sqrt{1+3t}+1}+\frac{-1}{\sqrt{1+2t}+1}\right)
\end{aligned}
\label{eq:1-4}
\end{equation}
and
\begin{equation}
\begin{aligned}l(\Omega) & =\sqrt{(x_{3}-x_{1})^{2}+(y_{3}-y_{1})^{2}}+\sqrt{(x_{2}-x_{1})^{2}+(y_{2}-y_{1})^{2}}\\
 & \qquad+\int_{y_{2}}^{y_{3}}\sqrt{1+\frac{(n+1)^{2}}{4y^{4}}}dy\\
 & <\sqrt{(x_{3}-x_{1})^{2}+(y_{3}-y_{1})^{2}}+\sqrt{(x_{2}-x_{1})^{2}+(y_{2}-y_{1})^{2}}\\
 & \qquad+(y_{3}-y_{2})\sqrt{1+\frac{(n+1)^{2}}{4y_{2}^{4}}}\\
 & =\frac{m}{2}\sqrt{\frac{17}{2}}\sqrt{-\sqrt{1+2t}+t+1}+\frac{m}{6}\sqrt{37}\sqrt{-2\sqrt{1+3t}+3t+2}\\
 & \qquad+m\left(\sqrt{1+3t}-\sqrt{1+2t}\right)\sqrt{\frac{\left(\sqrt{1+2t}-1\right)^{4}}{64t^{2}}+1}\\
 & =\frac{m}{2}\sqrt{\frac{17}{2}}\sqrt{1-\sqrt{1+2t}+t}+\frac{m}{6}\sqrt{37}\sqrt{2-2\sqrt{1+3t}+3t}\\
 & \qquad+\frac{mt}{\sqrt{1+3t}+\sqrt{1+2t}}\sqrt{\frac{t^{2}}{4\left(\sqrt{1+2t}+1\right)^{4}}+1}\\
 & <\frac{m}{2}\sqrt{\frac{17t}{2}}+\frac{m}{6}\sqrt{111t}+\frac{m\sqrt{t}}{\sqrt{3}+\sqrt{2}}\sqrt{\frac{1}{16}+1}\\
 & =\sqrt{\frac{17}{8}}\sqrt{n+1}+\sqrt{\frac{37}{12}}\sqrt{n+1}+\sqrt{\frac{17}{16}}\frac{1}{\sqrt{3}+\sqrt{2}}\sqrt{n+1}\\
 & <3.6\sqrt{n+1}.
\end{aligned}
\label{eq:1-5}
\end{equation}
It is easily seen that 
\begin{equation}
\begin{aligned}\sup_{P,Q\in\Omega}|x(P)-x(Q)| & =x_{2}-x_{1}=\frac{1}{8}\left(\sqrt{(2m)^{2}+8(n+1)}-(2m)\right)\\
 & =\frac{n+1}{\sqrt{(2m)^{2}+8(n+1)}+(2m)}\\
 & <\frac{\sqrt{2(n+1)}}{4}.
\end{aligned}
\label{eq:1-6}
\end{equation}
Let $N(\Omega)$ denote the number of lattice points in $\Omega$.
Applying Lemmas \ref{lemma2.2} and \ref{lemma2.3}, we get
\[
M_{1}^{(m)}(n)\leq\frac{N(\Omega)}{2}+\left|M_{1}^{(m)}(n)-\frac{N(\Omega)}{2}\right|
\]
\begin{align*}
 & \leq\frac{A(\Omega)+\left|N(\Omega)-A(\Omega)\right|}{2}+\left|M_{1}^{(m)}(n)-\frac{N(\Omega)}{2}\right|\\
 & <\frac{A(\Omega)}{2}+\frac{l(\Omega)}{2}+\sup_{P,Q\in\Omega}|x(P)-x(Q)|+1.
\end{align*}
Then the result follows readily by substituting \eqref{eq:1-4}, \eqref{eq:1-5}
and \eqref{eq:1-6} into the right side of the above inequality.
\end{proof}
\begin{lem}
\label{Lemma2.5} Let $m$ and $n$ be two non-negative integers and
\[
\Omega':=\left\{ (x,y)\bigg|xy<\frac{n+1}{2},2x-3y<2m,4x-y>2m,x>0,y>0\right\} 
\]
and let $M_{2}^{(m)}(n)$ denote the number of lattice points in $\Omega'$
with odd $y$-coordinates. Then 
\begin{align*}
M_{2}^{(m)}(n) & >\frac{1}{4}(n+1)\left(\frac{1}{\sqrt{1+3t}+1}+\frac{-1}{\sqrt{1+2t}+1}+\log\left(\frac{2\left(\sqrt{1+3t}+1\right)}{\sqrt{1+2t}+1}\right)\right)\\
 & \qquad-3.7\sqrt{n+1}-m-1,
\end{align*}
where $t$ is defined in \eqref{eq:t}.
\end{lem}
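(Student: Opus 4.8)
The plan is to re-run the machinery behind Lemma~\ref{Lemma2.4}, but now extracting a \emph{lower} bound for $M_{2}^{(m)}(n)$. Write $t=(n+1)/m^{2}$ (interpreted as $+\infty$ when $m=0$) and put
\[
Y_{1}:=\frac{-m+\sqrt{m^{2}+3(n+1)}}{3}=\frac{m\left(\sqrt{1+3t}-1\right)}{3},\qquad Y_{2}:=-m+\sqrt{m^{2}+2(n+1)}=m\left(\sqrt{1+2t}-1\right),
\]
so that $3Y_{1}^{2}+2mY_{1}=n+1$ and $Y_{2}^{2}+2mY_{2}=2(n+1)$; squaring and comparing gives $0\le Y_{1}\le Y_{2}$. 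First I would check that $\Omega'$ is the bounded region whose boundary runs along the $x$-axis from $P_{1}=(m/2,0)$ to $P_{2}=(m,0)$, then along the line $2x-3y=2m$ from $P_{2}$ to $P_{3}=\left(\frac{n+1}{2Y_{1}},Y_{1}\right)$, then along the arc of $xy=\frac{n+1}{2}$ from $P_{3}$ to $P_{4}=\left(\frac{n+1}{2Y_{2}},Y_{2}\right)$, and finally back along the line $4x-y=2m$ from $P_{4}$ to $P_{1}$; one has $\frac{n+1}{2Y_{1}}=\frac{m(\sqrt{1+3t}+1)}{2}$ and $\frac{n+1}{2Y_{2}}=\frac{m(\sqrt{1+2t}+1)}{4}$. (When $m=0$ the points $P_{1},P_{2}$ coalesce at the origin, $\Omega'$ degenerates to a curvilinear triangle, and all the estimates below remain valid with $t=+\infty$.)

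Next I would compute the area of $\Omega'$ by integrating over horizontal slices; the left edge of a slice lies on $4x-y=2m$ throughout, while the right edge lies on $2x-3y=2m$ for $0<y<Y_{1}$ and on the hyperbola for $Y_{1}<y<Y_{2}$, so
\[
A(\Omega')=\int_{0}^{Y_{1}}\frac{3y+2m}{2}\,dy+\int_{Y_{1}}^{Y_{2}}\frac{n+1}{2y}\,dy-\int_{0}^{Y_{2}}\frac{y+2m}{4}\,dy=\Bigl(mY_{1}+\tfrac{3}{4}Y_{1}^{2}\Bigr)+\frac{n+1}{2}\log\frac{Y_{2}}{Y_{1}}-\Bigl(\tfrac{m}{2}Y_{2}+\tfrac{1}{8}Y_{2}^{2}\Bigr).
\]
Using $2mY_{1}=n+1-3Y_{1}^{2}$ and $2mY_{2}=2(n+1)-Y_{2}^{2}$ this collapses to $A(\Omega')=\frac{n+1}{2}\log\frac{Y_{2}}{Y_{1}}-\frac{6Y_{1}^{2}-Y_{2}^{2}}{8}$, and then the elementary identities $\sqrt{1+3t}-1=\frac{3t}{\sqrt{1+3t}+1}$, $\sqrt{1+2t}-1=\frac{2t}{\sqrt{1+2t}+1}$, $m^{2}t=n+1$ give $6Y_{1}^{2}-Y_{2}^{2}=4(n+1)\bigl(\frac{1}{\sqrt{1+2t}+1}-\frac{1}{\sqrt{1+3t}+1}\bigr)$ together with $\frac{Y_{2}}{Y_{1}}=\frac{2(\sqrt{1+3t}+1)}{\sqrt{1+2t}+1}$, so that
\[
A(\Omega')=\frac{1}{2}(n+1)\left(\frac{1}{\sqrt{1+3t}+1}+\frac{-1}{\sqrt{1+2t}+1}+\log\left(\frac{2\left(\sqrt{1+3t}+1\right)}{\sqrt{1+2t}+1}\right)\right),
\]
which is exactly twice the main term of the claimed bound.

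Finally I would estimate the errors and combine. For the perimeter $l(\Omega')$, split it into its four boundary pieces: $|P_{1}P_{2}|=m/2$; next $|P_{2}P_{3}|=\tfrac{\sqrt{13}}{6}\,m(\sqrt{1+3t}-1)\le\tfrac{\sqrt{39}}{6}\sqrt{n+1}$ and $|P_{4}P_{1}|=\tfrac{\sqrt{17}}{4}\,Y_{2}\le\tfrac{\sqrt{34}}{4}\sqrt{n+1}$, using $m(\sqrt{1+3t}-1)=\sqrt{m^{2}+3(n+1)}-m\le\sqrt{3(n+1)}$ and $Y_{2}=\sqrt{m^{2}+2(n+1)}-m\le\sqrt{2(n+1)}$; and, since the hyperbola arc from $P_{3}$ to $P_{4}$ is monotone, its length is at most its horizontal span plus its vertical span, that is,
\[
\left(\frac{n+1}{2Y_{1}}-\frac{n+1}{2Y_{2}}\right)+(Y_{2}-Y_{1})\le\frac{\sqrt{m^{2}+3(n+1)}}{2}+\sqrt{2(n+1)}\le\frac{m}{2}+\left(\tfrac{\sqrt{3}}{2}+\sqrt{2}\right)\sqrt{n+1}.
\]
Adding these up, $l(\Omega')<m+4.78\sqrt{n+1}$; also $\sup_{P,Q\in\Omega'}|x(P)-x(Q)|=\frac{n+1}{2Y_{1}}-\frac{m}{2}=\tfrac{1}{2}\sqrt{m^{2}+3(n+1)}\le\frac{m}{2}+\tfrac{\sqrt{3}}{2}\sqrt{n+1}$. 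Now Lemma~\ref{lemma2.2} gives $N(\Omega')>A(\Omega')-l(\Omega')$ for the number $N(\Omega')$ of lattice points in $\Omega'$, and Lemma~\ref{lemma2.3} gives $M_{2}^{(m)}(n)\ge\tfrac{1}{2}N(\Omega')-\sup_{P,Q\in\Omega'}|x(P)-x(Q)|-1$, whence
\[
M_{2}^{(m)}(n)>\frac{A(\Omega')}{2}-\frac{l(\Omega')}{2}-\sup_{P,Q\in\Omega'}|x(P)-x(Q)|-1>\frac{A(\Omega')}{2}-m-3.7\sqrt{n+1}-1,
\]
which is the assertion. The step needing care is the arc-length estimate: the pointwise bound used in the proof of Lemma~\ref{Lemma2.4} is not available here, because the lower endpoint $Y_{1}$ of the arc can be made arbitrarily small (take $m$ large compared with $\sqrt{n+1}$), so the arc has to be controlled through its coordinate spans as above; everything else is routine computation.
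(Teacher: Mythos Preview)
Your proof is correct and follows the same overall strategy as the paper: compute $A(\Omega')$ exactly, bound $l(\Omega')$ and the horizontal diameter, then combine Lemma~\ref{lemma2.2} and Lemma~\ref{lemma2.3}. The differences are purely computational: you slice horizontally (in $y$) to get the area while the paper slices vertically (in $x$), and for the hyperbola arc you use the span bound $|\Delta x|+|\Delta y|$ whereas the paper parametrizes by $x$ and bounds $\sqrt{1+(y')^2}$ pointwise at $x=x_6$. Both routes yield the same area formula, and your perimeter constant ($\approx 4.78$) is in fact sharper than the paper's ($5.5$), so the final bound with $3.7$ follows with room to spare. One small correction to your closing remark: the pointwise derivative bound \emph{is} available here if one parametrizes by $x$ (since $x_6$ is bounded below by a multiple of $\sqrt{n+1}$), which is exactly what the paper does; it is only the $y$-parametrization of Lemma~\ref{Lemma2.4} that breaks down. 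Your span bound is a clean alternative that sidesteps the issue entirely.
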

\begin{figure}[H]
\caption{Schematic diagram of $\Omega'$}
\end{figure}
 \includegraphics[scale=0.13]{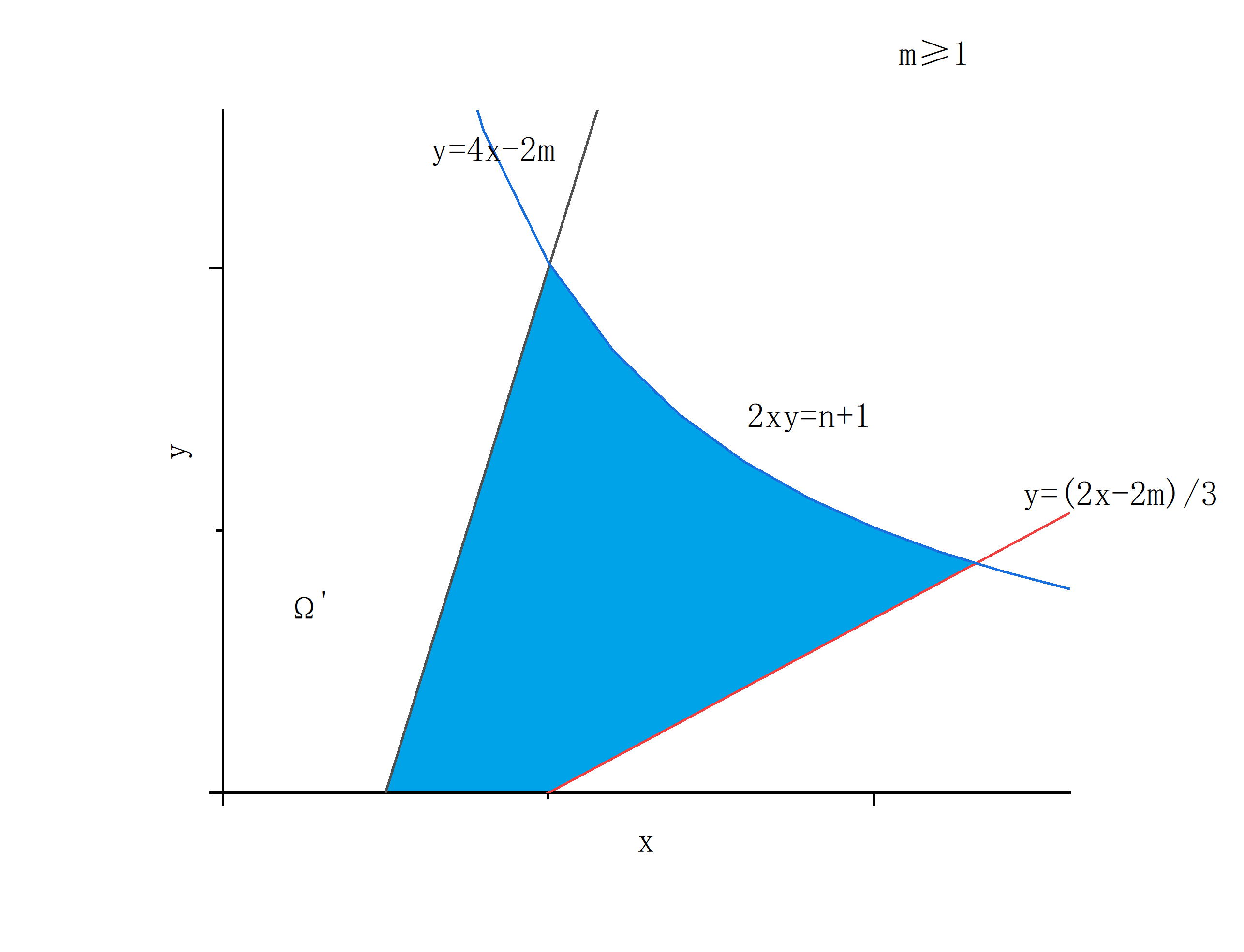} \includegraphics[scale=0.13]{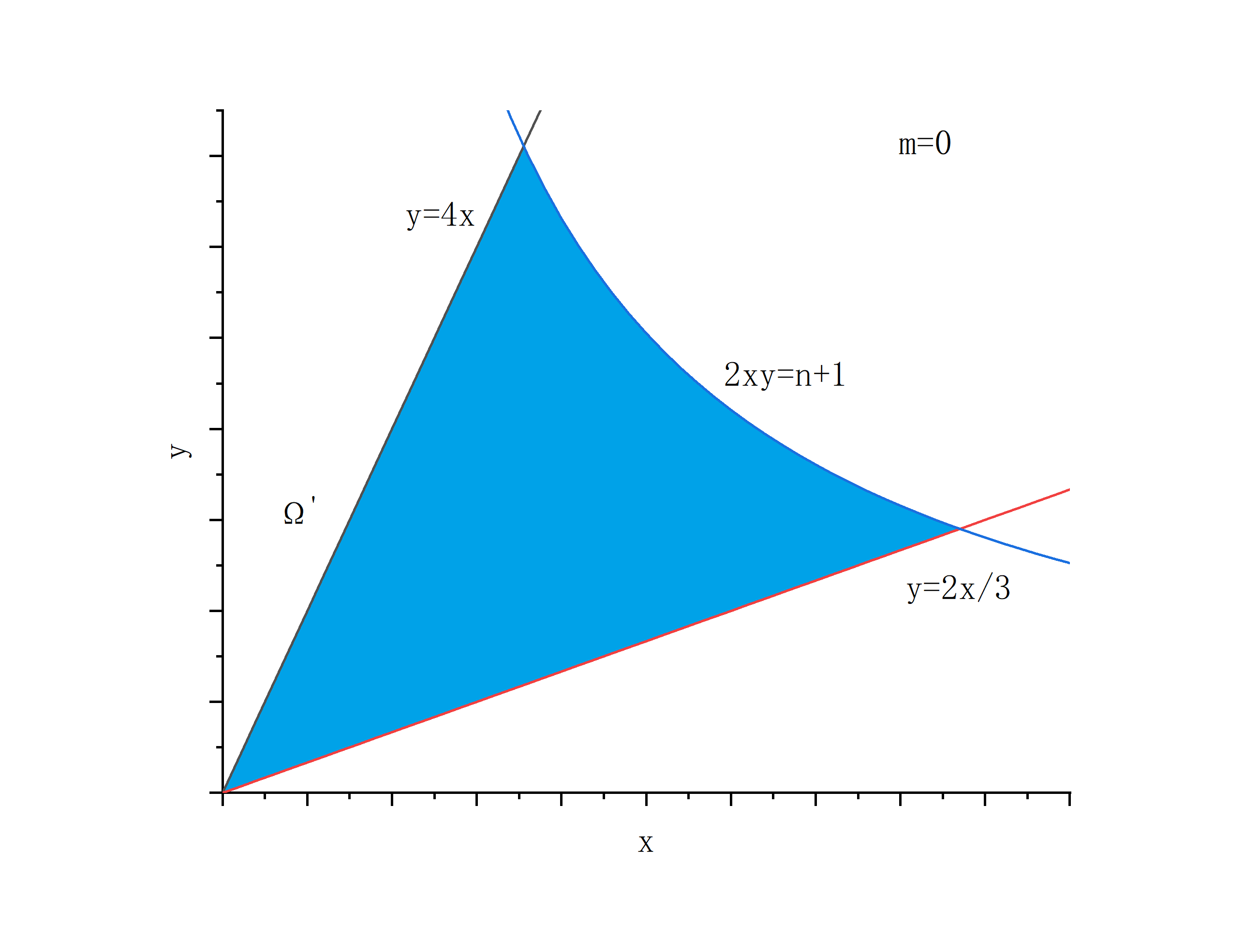}
\begin{proof}
Let $A(\Omega')$ and $l(\Omega')$ denote the area of $\Omega'$
and the length of the boundary curve of $\Omega'$ respectively. Set
\begin{align*}
 & x_{4}=\frac{m}{2},\quad y_{4}=0,\\
 & x_{5}=m,\quad y_{5}=0,\\
 & x_{6}=\frac{1}{8}\left(\sqrt{(2m)^{2}+8(n+1)}+2m\right),\quad y_{6}=\frac{n+1}{2x_{6}},\\
 & x_{7}=\frac{1}{4}\left(\sqrt{(2m)^{2}+12(n+1)}+2m\right),\quad y_{7}=\frac{n+1}{2x_{7}}.
\end{align*}
Then
\begin{equation}
\begin{aligned}A(\Omega') & =\frac{(x_{6}-x_{4})y_{6}}{2}+\int_{x_{6}}^{x_{7}}\frac{n+1}{2x}dx-\frac{(x_{7}-x_{5})y_{7}}{2}\\
 & =\frac{1}{2}(n+1)\left(\frac{2m}{\sqrt{(2m)^{2}+12(n+1)}+2m}+\frac{-2m}{\sqrt{(2m)^{2}+8(n+1)}+2m}\right.\\
 & \qquad\left.+\log2\left(\frac{\sqrt{(2m)^{2}+12(n+1)}+2m}{\sqrt{(2m)^{2}+8(n+1)}+2m}\right)\right)\\
 & =\frac{1}{2}(n+1)\left(\frac{1}{\sqrt{1+3t}+1}+\frac{-1}{\sqrt{1+2t}+1}+\log\left(\frac{2\left(\sqrt{1+3t}+1\right)}{\sqrt{1+2t}+1}\right)\right)
\end{aligned}
\label{eq:1-7}
\end{equation}
and 
\begin{align*}
l(\Omega') & =\sqrt{(x_{6}-x_{4})^{2}+(y_{6}-y_{4})^{2}}+\sqrt{(x_{7}-x_{5})^{2}+(y_{7}-y_{5})^{2}}\\
 & \qquad+x_{5}-x_{4}+\int_{x_{6}}^{x_{7}}\sqrt{1+\frac{(n+1)^{2}}{4x^{4}}}dx\\
 & <\sqrt{(x_{6}-x_{4})^{2}+(y_{6}-y_{4})^{2}}+\sqrt{(x_{7}-x_{5})^{2}+(y_{7}-y_{5})^{2}}\\
 & \qquad+x_{5}-x_{4}+(x_{7}-x_{6})\sqrt{1+\frac{(n+1)^{2}}{4x_{6}^{4}}}\\
 & =\frac{1}{2}\left(\sqrt{\frac{17(n+1)^{2}}{\left(\sqrt{m^{2}+2(n+1)}+m\right)^{2}}}+\sqrt{\frac{13(n+1)^{2}}{\left(\sqrt{m^{2}+3(n+1)}+m\right)^{2}}}+m\right)\\
 & \qquad+\frac{\lambda(m,n)}{4}\sqrt{\frac{64(n+1)^{2}}{\left(\sqrt{m^{2}+2(n+1)}+m\right)^{4}}+1},
\end{align*}
where
\[
\lambda(m,n):=2\sqrt{m^{2}+3(n+1)}-\sqrt{m^{2}+2(n+1)}+m.
\]
Note that
\begin{align*}
 & 2\sqrt{m^{2}+3(n+1)}-\sqrt{m^{2}+2(n+1)}\\
 & =2\left(\sqrt{m^{2}+3(n+1)}-m\right)-\left(\sqrt{m^{2}+2(n+1)}-m\right)+m\\
 & =\frac{6(n+1)}{\sqrt{m^{2}+3(n+1)}+m}-\frac{2(n+1)}{\sqrt{m^{2}+2(n+1)}+m}+m\\
 & <\frac{4(n+1)}{\sqrt{m^{2}+2(n+1)}+m}+m\\
 & \leq2\sqrt{2(n+1)}+m.
\end{align*}
With this inequality and \eqref{eq:t} we deduce that
\begin{equation}
\begin{aligned}l(\Omega') & <\frac{\sqrt{17(n+1)}}{2}\sqrt{\frac{t}{\left(\sqrt{1+2t}+1\right)^{2}}}+\frac{\sqrt{13(n+1)}}{2}\sqrt{\frac{t}{\left(\sqrt{1+3t}+1\right)^{2}}}\\
 & \qquad+\frac{\sqrt{2(n+1)}}{2}\sqrt{\frac{64t^{2}}{\left(\sqrt{1+2t}+1\right)^{4}}+1}+m\\
 & <\frac{\sqrt{17(n+1)}}{2}\sqrt{\frac{1}{2}}+\frac{\sqrt{13(n+1)}}{2}\sqrt{\frac{1}{3}}+\frac{\sqrt{2(n+1)}}{2}\sqrt{17}+m\\
 & <5.5\sqrt{n+1}+m.
\end{aligned}
\label{eq:1-8}
\end{equation}
It can be easily deduced that 
\begin{equation}
\begin{aligned}\sup_{P,Q\in\Omega'}|x(P)-x(Q)| & =x_{7}-x_{4}=\frac{1}{4}\sqrt{(2m)^{2}+12(n+1)}\\
 & \leq\frac{\sqrt{3(n+1)}}{2}+\frac{m}{2},
\end{aligned}
\label{eq:1-9}
\end{equation}
where in the last step we have used the simple inequality: 
\[
\sqrt{a^{2}+b^{2}}\leq a+b,\quad a\geq0,b\geq0.
\]

Let $N(\Omega')$ denote the number of lattice points in $\Omega'.$
Using Lemmas \ref{lemma2.2} and \ref{lemma2.3}, we derive that
\begin{align*}
M_{2}^{(m)}(n) & =\left|\frac{N(\Omega')}{2}+\left(M_{2}^{(m)}(n)-\frac{N(\Omega')}{2}\right)\right|\\
 & \geq\frac{N(\Omega')}{2}-\left|M_{2}^{(m)}(n)-\frac{N(\Omega')}{2}\right|\\
 & \geq\frac{A(\Omega')-\left|N(\Omega')-A(\Omega')\right|}{2}-\left|M_{2}^{(m)}(n)-\frac{N(\Omega')}{2}\right|\\
 & >\frac{A(\Omega')}{2}-\frac{l(\Omega')}{2}-\sup_{P,Q\in\Omega'}|x(P)-x(Q)|-1.
\end{align*}
Substituting \eqref{eq:1-7}, \eqref{eq:1-8} and \eqref{eq:1-9}
into the right side of the above inequality we easily obtain the result.
\end{proof}

\section{\label{sec:P} Proofs of Theorems \ref{L2}, \ref{lemma2.8} and
\ref{L1}}

In this section, we first prove Theorem \ref{L1} and then show Theorems
\ref{L2} and \ref{lemma2.8}.

\noindent{\it Proof of Theorem \ref{L1}.}  It is easily seen from
\eqref{eq:1-1} that $X^{(m)}(0)=0.$ We suppose $n\geq1$ below.
For any positive integer $n,$ we set $n=2^{e}\cdot N,$ where $e$
is a non-negative integer and $N$ denotes an odd positive integer.

For convinence, we introduce four sets in $\mathbb{N}^{2}.$ Let
\begin{align}
A_{1} & :=\left\{ (d_{1},d_{2})\in\mathbb{N}^{2}|d_{1}d_{2}=N,d_{2}-2^{e}d_{1}\geq2m+1\:is\:odd\right\} ,\nonumber \\
A_{2} & :=\left\{ (d_{1},d_{2})\in\mathbb{\mathbb{N}}^{2}|d_{1}d_{2}=N,d_{2}-2^{e+1}d_{1}\geq2m+1\:is\:odd\right\} ,\label{eq:1-10}\\
B_{1} & :=\left\{ (d_{1},d_{2})\in\mathbb{\mathbb{N}}^{2}|d_{1}d_{2}=N,2^{e+1}d_{2}-d_{1}\geq2m+1\:is\:odd\right\} ,\label{eq:1-11}\\
B_{2} & :=\left\{ (d_{1},d_{2})\in\mathbb{\mathbb{N}}^{2}|d_{1}d_{2}=N,2^{e}d_{2}-d_{1}\geq2m+1\:is\:odd\right\} .\nonumber 
\end{align}
From \eqref{eq:1-1} we deduce that
\begin{equation}
\begin{aligned}Y^{(m)}(n) & =\sum_{\substack{t(t+1)+2mt+2lt=n\\
t\geq1,l\geq0
}
}(-1)^{t}-\sum_{\substack{t(t+1)+2mt+2lt=2n\\
t\geq1,l\geq0
}
}(-1)^{t}\\
 & =\sum_{\substack{t(t+s)=n\\
t\geq1\\
s\geq2m+1\:is\:odd
}
}(-1)^{t}-\sum_{\substack{t(t+s)=2n\\
t\geq1\\
s\geq2m+1\:is\:odd
}
}(-1)^{t}.
\end{aligned}
\label{eq:1-2}
\end{equation}
Then
\begin{equation}
\begin{aligned}Y^{(m)}(n) & =\sum_{(d_{1},d_{2})\in A_{1}}1-\sum_{(d_{1},d_{2})\in B_{2}}1-\sum_{(d_{1},d_{2})\in A_{2}}1+\sum_{(d_{1},d_{2})\in B_{1}}1\\
 & =\#A_{1}+\#B_{1}-\#A_{2}-\#B_{2},
\end{aligned}
\label{eq:1-3}
\end{equation}
where $\#S$ denotes the number of the elements in a finite set $S.$

If $e=0,$ then 
\[
A_{1}=B_{2}=\phi
\]
and 
\[
A_{2}\subseteq B_{1}.
\]
This implies that 
\[
\#A_{1}=\#B_{2}=0
\]
and
\begin{equation}
\#B_{1}\geq\#A_{2}.\label{eq:3-1}
\end{equation}
Thus, from \eqref{eq:1-3} we obtain
\[
Y^{(m)}(n)=\#B_{1}-\#A_{2}\geq0.
\]

If $e\geq1,$ then
\[
A_{2}\subseteq A_{1},\quad B_{2}\subseteq B_{1}
\]
and so 
\[
\#A_{1}\geq\#A_{2},\quad\#B_{1}\geq\#B_{2}.
\]
This, together with \eqref{eq:1-3}, gives 
\[
Y^{(m)}(n)\geq0.
\]
This completes the proof of Theorem \ref{L1}. \qed

\noindent{\it Proof of Theorem \ref{L2}.} From \eqref{eq:1-13} we
know that $Y^{(m)}(0)=0.$ We now consider the case $n\geq1.$ Let\footnote{It is easy to see that $Z^{(m)}(0)=0.$}
\[
\sum_{n\geq1}Z^{(m)}(n)q^{n}:=-\sum_{n\geq1}\frac{(-1)^{n}q^{\frac{n(n+1)}{2}+mn}}{1-q^{n}}
\]
and let $A_{2}$ and $B_{1}$ be defined as in \eqref{eq:1-10} and
\eqref{eq:1-11} respectively. Then
\begin{equation}
\begin{aligned}Z^{(m)}(n) & =-\sum_{\substack{t(t+1)+2mt+2lt=2n\\
t\geq1,l\geq0
}
}(-1)^{t}\\
 & =-\sum_{\substack{t(t+s)=2n\\
t\geq1\\
s\geq2m+1\:is\:odd
}
}(-1)^{t}\\
 & =\sum_{(d_{1},d_{2})\in B_{1}}1-\sum_{(d_{1},d_{2})\in A_{2}}1\\
 & =\#B_{1}-\#A_{2}\geq0,
\end{aligned}
\label{eq:1-12-1}
\end{equation}
where in the last step we have used the inequality \eqref{eq:3-1}.

When $n$ is odd, since
\[
\sum_{\substack{n\geq0\\
n\;is\;odd
}
}X^{(m)}(n)q^{n}=\frac{1}{1-q^{2}}\sum_{\substack{n\geq1\\
n\;is\;odd
}
}Z^{(m)}(n)q^{n},
\]
we deduce from \eqref{eq:1-12-1} that $X^{(m)}(n)$ is nonnegative.

When $n$ is even, it is easy to see that the coefficient of $q^{n}$
in the $q$-series expansion of 
\[
\sum_{n=1}^{\infty}\frac{(-1)^{n}q^{n(3n+1)+2mn}}{1-q^{2n}}
\]
is equal to 
\[
\sum_{\substack{t(3t+1)+2mt+2lt=n\\
t\geq1,l\geq0
}
}(-1)^{t}.
\]
Since
\begin{align*}
\sum_{\substack{t(3t+1)+2mt+2lt=n\\
t\geq1,l\geq0
}
}(-1)^{t} & =\sum_{\substack{t(3t+s)=n\\
t\geq1\\
s\geq1+2m\:is\:odd
}
}(-1)^{t}\\
 & =\sum_{\substack{d_{1}d_{2}=n\\
d_{1}\geq1\:is\:odd,d_{2}\geq1\\
d_{1}-3d_{2}\geq1+2m
}
}1-\sum_{\substack{d_{1}d_{2}=n\\
d_{1}\geq1\:is\:odd,d_{2}\geq1\\
d_{2}-3d_{1}\geq1+2m
}
}1,
\end{align*}
we know that the coefficient of $q^{n}$ in the $q$-series expansion
of 
\[
\sum_{n=1}^{\infty}\frac{(-1)^{n}q^{n(3n+1)+2mn}}{1-q^{2n}}
\]
is
\[
\sum_{\substack{d_{1}d_{2}=n\\
d_{1}\geq1\:is\:odd,d_{2}\geq1\\
d_{1}-3d_{2}\geq1+2m
}
}1-\sum_{\substack{d_{1}d_{2}=n\\
d_{1}\geq1\:is\:odd,d_{2}\geq1\\
d_{2}-3d_{1}\geq1+2m
}
}1.
\]
Then the coefficient of $q^{n}$ in the $q$-series expansion of 
\[
\frac{1}{1-q^{2}}\sum_{n=1}^{\infty}\frac{(-1)^{n}q^{n(3n+1)+2mn}}{1-q^{2n}}
\]
equals
\[
\sum_{\substack{1\leq k\leq n\\
2|k
}
}\left(\sum_{\substack{d_{1}d_{2}=k\\
d_{1}\geq1\:is\:odd,d_{2}\geq1\\
d_{1}-3d_{2}\geq1+2m
}
}1-\sum_{\substack{d_{1}d_{2}=k\\
d_{1}\geq1\:is\:odd,d_{2}\geq1\\
d_{2}-3d_{1}\geq1+2m
}
}1\right).
\]
It can be deduced from \eqref{eq:1-12-1} that 
\begin{align*}
\sum_{\substack{1\leq k\leq n\\
2|k
}
}Z^{(m)}(k) & =\sum_{\substack{1\leq k\leq n\\
2|k
}
}\sum_{\substack{t(t+s)=2k\\
t\geq1\\
s\geq2m+1\:is\:odd
}
}(-1)^{t+1}\\
 & =\sum_{\substack{1\leq k\leq n\\
2|k
}
}\left(\sum_{\substack{d_{1}d_{2}=2k\\
d_{1}\geq1\:is\:odd,d_{2}\geq1\\
d_{2}-d_{1}\geq2m+1
}
}1-\sum_{\substack{d_{1}d_{2}=2k\\
d_{1}\geq1\:is\:odd,d_{2}\geq1\\
d_{1}-d_{2}\geq2m+1
}
}1\right).
\end{align*}
Then the coefficient of $q^{n}$ in the $q$-series expansion of 
\[
-\frac{1}{1-q^{2}}\sum_{n\geq1}\frac{(-1)^{n}q^{\frac{n(n+1)}{2}+mn}}{1-q^{n}}
\]
is equal to 
\[
\sum_{\substack{1\leq k\leq n\\
2|k
}
}\left(\sum_{\substack{d_{1}d_{2}=2k\\
d_{1}\geq1\:is\:odd,d_{2}\geq1\\
d_{2}-d_{1}\geq2m+1
}
}1-\sum_{\substack{d_{1}d_{2}=2k\\
d_{1}\geq1\:is\:odd,d_{2}\geq1\\
d_{1}-d_{2}\geq2m+1
}
}1\right).
\]

In view of the above, we deduce that 
\begin{align*}
X^{(m)}(n) & =\sum_{\substack{1\leq k\leq n\\
2|k
}
}\left(\sum_{\substack{d_{1}d_{2}=k\\
d_{1}\geq1\:is\:odd,d_{2}\geq1\\
d_{1}-3d_{2}\geq2m+1
}
}1-\sum_{\substack{d_{1}d_{2}=k\\
d_{1}\geq1\:is\:odd,d_{2}\geq1\\
d_{2}-3d_{1}\geq2m+1
}
}1+\sum_{\substack{d_{1}d_{2}=2k\\
d_{1}\geq1\:is\:odd,d_{2}\geq1\\
d_{2}-d_{1}\geq2m+1
}
}1-\sum_{\substack{d_{1}d_{2}=2k\\
d_{1}\geq1\:is\:odd,d_{2}\geq1\\
d_{1}-d_{2}\geq2m+1
}
}1\right)\\
 & =\sum_{\substack{1\leq k\leq n\\
2|k
}
}\left(\sum_{\substack{d_{1}d_{2}=k\\
d_{1}\geq1\:is\:odd,d_{2}\geq1\\
d_{1}-3d_{2}\geq2m+1
}
}1-\sum_{\substack{d_{1}d_{2}=k\\
d_{1}\geq1\:is\:odd,d_{2}\geq1\\
d_{2}-3d_{1}\geq2m+1
}
}1+\sum_{\substack{d_{1}d_{2}=k\\
d_{1}\geq1\:is\:odd,d_{2}\geq1\\
2d_{2}-d_{1}\geq2m+1
}
}1-\sum_{\substack{d_{1}d_{2}=k\\
d_{1}\geq1\:is\:odd,d_{2}\geq1\\
d_{1}-2d_{2}\geq2m+1
}
}1\right)\\
 & =\sum_{\substack{1\leq k\leq n\\
2|k
}
}\left(\sum_{\substack{d_{1}d_{2}=k\\
d_{1}\geq1\:is\:odd,d_{2}\geq1\\
2d_{2}-d_{1}\geq2m+1\\
d_{2}-3d_{1}<2m+1
}
}1-\sum_{\substack{d_{1}d_{2}=k\\
d_{1}\geq1\:is\:odd,d_{2}\geq1\\
d_{1}-2d_{2}\geq2m+1\\
d_{1}-3d_{2}<2m+1
}
}1\right).
\end{align*}
From this we obtain 
\begin{align*}
X^{(m)}(n) & =\sum_{\substack{d_{1}d_{2}\leq\frac{n}{2}\\
d_{1}\geq1\:is\:odd,d_{2}\geq1\\
4d_{2}-d_{1}\geq2m+1\\
2d_{2}-3d_{1}<2m+1
}
}1-\sum_{\substack{d_{1}d_{2}\leq\frac{n}{2}\\
d_{1}\geq1\:is\:odd,d_{2}\geq1\\
d_{1}-4d_{2}\geq2m+1\\
d_{1}-6d_{2}<2m+1
}
}1\\
 & =\sum_{\substack{d_{1}d_{2}<\frac{n+1}{2}\\
d_{1}\geq1\:is\:odd,d_{2}\geq1\\
4d_{2}-d_{1}>2m\\
2d_{2}-3d_{1}<2m
}
}1-\sum_{\substack{d_{1}d_{2}<\frac{n+1}{2}\\
d_{1}\geq1\:is\:odd,d_{2}\geq1\\
d_{1}-4d_{2}>2m\\
d_{1}-6d_{2}<2m
}
}1\\
 & =M_{2}^{(m)}(n)-M_{1}^{(m)}(n),
\end{align*}
where $M_{1}^{(m)}(n)$ and $M_{2}^{(m)}(n)$ are defined in Lemmas
\ref{Lemma2.4} and \ref{Lemma2.5} respectively.

It follows from Lemmas \ref{Lemma2.4} and \ref{Lemma2.5} that 
\[
M_{2}^{(m)}(n)-M_{1}^{(m)}(n)>\frac{\log2}{4}(n+1)-6\sqrt{n+1}-m-2.
\]
Then 
\[
X^{(m)}(n)>\frac{\log2}{4}(n+1)-6\sqrt{n+1}-m-2.
\]
When $n\geq f(m),$ we have 
\[
\frac{\log2}{4}(n+1)-6\sqrt{n+1}-m-2\geq0
\]
and so $X^{(m)}(n)>0.$ This finishes the proof of Theorem \ref{L2}.
\qed

\noindent{\it Proof of Theorem \ref{lemma2.8}.} Let
\[
T(q):=\frac{1}{1-q^{2}}\left(\sum_{n=1}^{9}\frac{(-1)^{n}q^{n(3n+1)+2mn}}{1-q^{2n}}-\sum_{n=1}^{19}\frac{(-1)^{n}q^{\frac{n(n+1)}{2}+mn}}{1-q^{n}}\right).
\]
If $n\leq20m,$ then $X^{(m)}(n)$ is equal to the coefficient of
$q^{n}$ in the $q$-series expansion of $T(q).$

We define that
\begin{align*}
T_{1}(q) & :=\frac{1}{1-q^{2}}\left(\frac{q^{1+m}}{1-q}-\frac{q^{3+2m}}{1-q^{2}}-\frac{q^{4+2m}}{1-q^{2}}-\frac{q^{10+4m}}{1-q^{4}}+\frac{q^{14+4m}}{1-q^{4}}\right)\\
 & \quad+\frac{1}{1-q^{2}}\left(\frac{q^{52+8m}}{1-q^{8}}+\frac{q^{200+16m}}{1-q^{16}}-\frac{q^{136+16m}}{1-q^{16}}-\frac{q^{36+8m}}{1-q^{8}}\right),\\
T_{3}(q) & :=\frac{1}{1-q^{2}}\left(\frac{q^{6+3m}}{1-q^{3}}-\frac{q^{21+6m}}{1-q^{6}}-\frac{q^{30+6m}}{1-q^{6}}-\frac{q^{78+12m}}{1-q^{12}}+\frac{q^{114+12m}}{1-q^{12}}\right),\\
T_{5}(q) & :=\frac{1}{1-q^{2}}\left(\frac{q^{15+5m}}{1-q^{5}}-\frac{q^{55+10m}}{1-q^{10}}-\frac{q^{80+10m}}{1-q^{10}}\right),\\
T_{7}(q) & :=\frac{1}{1-q^{2}}\left(\frac{q^{28+7m}}{1-q^{7}}-\frac{q^{154+14m}}{1-q^{14}}-\frac{q^{105+14m}}{1-q^{14}}\right),\\
T_{9}(q) & :=\frac{1}{1-q^{2}}\left(\frac{q^{45+9m}}{1-q^{9}}-\frac{q^{171+18m}}{1-q^{18}}-\frac{q^{252+18m}}{1-q^{18}}\right),
\end{align*}
and
\begin{align*}
T'(q) & :=\frac{q^{11(11+1)/2+11m}}{(1-q^{11})(1-q^{2})}+\frac{q^{13(13+1)/2+13m}}{(1-q^{13})(1-q^{2})}+\frac{q^{15(15+1)/2+15m}}{(1-q^{15})(1-q^{2})}\\
 & \quad+\frac{q^{17(17+1)/2+17m}}{(1-q^{17})(1-q^{2})}+\frac{q^{19(19+1)/2+19m}}{(1-q^{19})(1-q^{2})}.
\end{align*}
Then 
\begin{align*}
T(q) & =T_{1}(q)+T_{3}(q)+T_{5}(q)+T_{7}(q)+T_{9}(q)+T'(q).
\end{align*}
Note that
\begin{align*}
 & \frac{q^{28+7m}}{1-q^{7}}-\frac{q^{105+14m}(1+q^{49})}{1-q^{14}}\\
 & =\frac{q^{28+7m}}{1-q^{7}}-\frac{q^{105+14m}(1-q^{7}+q^{14}-q^{21}+q^{28}-q^{35}+q^{42})}{1-q^{7}}\\
 & =\frac{q^{28+7m}-q^{105+14m}}{1-q^{7}}+q^{105+14m}(q^{7}+q^{21}+q^{35})
\end{align*}
and
\begin{align*}
 & \frac{q^{45+9m}}{1-q^{9}}-\frac{q^{171+18m}(1+q^{81})}{1-q^{18}}\\
 & =\frac{q^{45+9m}}{1-q^{9}}-\frac{q^{171+18m}}{1-q^{9}}\left(1-q^{9}+q^{18}-q^{27}+q^{36}-q^{45}+q^{54}-q^{63}+q^{72}\right)\\
 & =\frac{q^{45+9m}-q^{171+18m}}{1-q^{9}}+q^{171+18m}\left(q^{9}+q^{27}+q^{45}+q^{63}\right).
\end{align*}
Since 
\[
\frac{x^{a}-x^{b}}{1-x}=\sum_{j=a}^{b-1}x^{j},\quad0\leq a<b,
\]
we have
\[
\frac{x^{a}-x^{b}}{1-x}
\]
has non-negative coefficients. From this we know that the coefficients
of the $q$-series expansion in each of the $q$-series 
\[
\frac{q^{28+7m}-q^{105+14m}}{1-q^{7}}
\]
and 
\[
\frac{q^{45+9m}-q^{171+18m}}{1-q^{9}}
\]
are all non-negative and so each of the $q$-series
\[
\frac{q^{28+7m}}{1-q^{7}}-\frac{q^{105+14m}(1+q^{49})}{1-q^{14}}
\]
and 
\[
\frac{q^{45+9m}}{1-q^{9}}-\frac{q^{171+18m}(1+q^{81})}{1-q^{18}}
\]
has also non-negative coefficients. This implies that the coefficients
of $T_{7}(q)$  and $T_{9}(q)$ are all non-negative.

Let
\[
R_{1}(q):=T_{7}(q)+T_{9}(q)+T'(q).
\]
It is easy to see that $T'(q)$ has non-negative coefficients. Thus
$R_{1}(q)$ has also non-negative coefficients.

Note that 
\[
T(q)=T_{1}(q)+T_{3}(q)+T_{5}(q)+R_{1}(q).
\]
After direct computations we find that
\begin{align*}
T(q) & =\frac{1}{1-q^{2}}\left(\sum_{k=1+m}^{2+2m}q^{k}-q^{10+4m}\right)\\
 & \quad+\frac{1}{1-q^{2}}\left(-q^{36+8m}-q^{44+8m}-q^{136+16m}-q^{152+16m}-q^{168+16m}-q^{184+16m}\right)\\
 & \quad+\frac{1}{1-q^{2}}\left(\sum_{k=2+m}^{6+2m}q^{3k}+q^{24+6m}-q^{78+12m}-q^{90+12m}-q^{102+12m}\right)\\
 & \quad+\frac{1}{1-q^{2}}\left(\sum_{k=3+m}^{10+2m}q^{5k}+q^{60+10m}+q^{70+10m}\right)+R_{1}(q).
\end{align*}
Let
\begin{align*}
R_{2}(q) & :=R_{1}(q)+\frac{1}{1-q^{2}}\left(q^{70+10m}+\sum_{1+m\leq k\leq1+2m}q^{k}\right.\\
 & \qquad\left.+\sum_{{2+m\leq k\leq6+2m\atop k\neq2+2m,4+2m,6+2m}}q^{3k}+\sum_{{3+m\leq k\leq10+2m\atop k\neq2+2m,4+2m,6+2m,8+2m}}q^{5k}\right).
\end{align*}
We re-arrange the terms of $T(q)$ to get
\begin{align*}
T(q) & =\frac{1}{1-q^{2}}\left[\left(q^{2+2m}-q^{10+4m}\right)+\left(q^{6+6m}-q^{36+8m}\right)+\left(q^{12+6m}-q^{44+8m}\right)\right.\\
 & \;+\left(q^{18+6m}-q^{78+12m}\right)+\left(q^{24+6m}-q^{90+12m}\right)+\left(q^{60+10m}-q^{102+12m}\right)\\
 & \;+\left(q^{20+10m}-q^{136+16m}\right)+\left(q^{10+10m}-q^{152+16m}\right)+\left(q^{30+10m}-q^{168+16m}\right)\\
 & \left.\;+\left(q^{40+10m}-q^{184+16m}\right)\right]+R_{2}(q).
\end{align*}
It is easy to see that $R_{2}(q)$ is a $q$-series with non-negative
coefficients. From this we obtain that $T(q)$ has non-negative coefficients
and so $X^{(m)}(n)\geq0$ for $n\leq20m.$ This concludes the proof
of Theorem \ref{lemma2.8}. \qed

\section{\label{sec:Pr} Proof of Theorem \ref{t}}

Recall the following generating functions \cite[eqs.(2.1) and (2.2)]{JK}
for $M_{C1}(m,n)$ and $M_{C5}(m,n):$
\[
\sum_{n=1}^{\infty}M_{C1}(m,n)q^{n}=\frac{1}{(q^{2};q^{2})_{\infty}}\left(\sum_{n=1}^{\infty}\frac{(-1)^{n}q^{n(3n+1)+2|m|n}}{1-q^{2n}}-\sum_{n=1}^{\infty}\frac{(-1)^{n}q^{\frac{n(n+1)}{2}+|m|n}}{1-q^{n}}\right)
\]
and
\[
\sum_{n=1}^{\infty}M_{C5}(m,n)q^{n}=\frac{1}{(q^{2};q^{2})_{\infty}}\left(\sum_{n=1}^{\infty}\frac{(-1)^{n}q^{n(n+1)+2|m|n}}{1-q^{2n}}-\sum_{n=1}^{\infty}\frac{(-1)^{n}q^{\frac{n(n+1)}{2}+|m|n}}{1-q^{n}}\right).
\]
From the above generating functions we know that $M_{C1}(m,n)$ and
$M_{C5}(m,n)$ are both symmetric in $m.$ Namely, 
\[
M_{C1}(-m,n)=M_{C1}(m,n)
\]
and
\[
M_{C5}(-m,n)=M_{C5}(m,n).
\]
Thus, we only need to consider the nonnegativity of $M_{C1}(m,n)$
and $M_{C5}(m,n)$ when $m\geq0,n\geq0.$

We first consider $M_{C5}(m,n)$. Since
\[
\sum_{n=1}^{\infty}M_{C5}(m,n)q^{n}=\frac{1}{(q^{2};q^{2})_{\infty}}\cdot\sum_{n\geq0}Y^{(m)}(n)q^{n},
\]
 we deduce from Theorem \ref{L1} that $M_{C5}(m,n)$ is nonnegative
for $m\geq0,n\geq0.$

We next consider $M_{C1}(m,n).$ Let $f(m)$ be defined as in \eqref{eq:t1-1}.
Note that 
\[
f(m)>20m
\]
for $0\leq m\leq120$ and
\[
f(m)<20m
\]
for $m\geq121.$ In view of Theorems \ref{L2} and \ref{lemma2.8},
to study the nonnegativity of $X^{(m)}(n),$ we only need to consider
its nonnegativity for the case $0\leq m\leq120$ and $f(m)>n>20m.$
With the help of the software \emph{Mathematica} we can verify the
finite cases that $X^{(m)}(n)\geq0$ for $0\leq m\leq120$ and $f(m)>n>20m.$
This implies that $X^{(m)}(n)\geq0$ for all $m\geq0,n\geq0.$ Since
\[
\sum_{n=1}^{\infty}M_{C1}(m,n)q^{n}=\prod_{j\geq2}\frac{1}{1-q^{2j}}\cdot\sum_{n\geq0}X^{(m)}(n)q^{n},
\]
we deduce from the nonnegativity of $X^{(m)}(n)$ that $M_{C1}(m,n)\geq0$
for all $m\geq0,n\geq0.$ This completes the proof of Theorem \ref{t}.
\qed

\section*{Acknowledgement}

This work was partially supported by National Natural Science Foundation
of China (Grant No. 11801451) and the Natural Science Foundation of
Changsha (Grant No. kq2208251).

\end{document}